\theoremstyle{plain}
        \newtheorem{theorem}{Theorem}[section]
        \newtheorem*{theorem*}{Theorem}
        \newtheorem*{conj*}{Conjecture}
        \newtheorem{lemma}[theorem]{Lemma}
        \newtheorem{cor}[theorem]{Corollary}
\theoremstyle{definition}
\theoremstyle{remark}
\numberwithin{equation}{section}
\newcounter{mylistnum}
\newcommand{\dist}{\operatorname{dist}}
\newcommand{\diam}  {\operatorname{diam}}
\newcommand{\id} {\operatorname{id}}
\newcommand{\R}{\mathbb{R}}  
\newcommand{\C}{\mathbb{C}}      
\newcommand{\N}{\mathbb{N}}      
\providecommand{\abs}[1]{\lvert#1\rvert}
\newcommand{\I} {\mathbf{I}}
\newcommand{\G} {\mathbf{\Gamma}}
\newcommand{\Sim} {\!\sim \!}
\newcommand{\Se}{\mathsf{S}^1}
\newcommand{\dia}{\operatorname{\mathsf{dd}}}
\begin{document}
\title{Bounded turning circles are weak-quasicircles}

\author{Daniel Meyer}

\thanks{This research was supported by the Academy of Finland,
  projects SA-134757 and SA-118634} 
\address{University of Helsinki, Department of Mathematics and
  Statistics, P.O. Box 68,
FI-00014 University of Helsinki, Finland.}
\email{dmeyermail@gmail.com}

\date{\today}

\keywords{Quasisymmetry, weak-quasisymmetry, bounded turning, weak-quasicircle.}
\subjclass[2000]{Primary: 30C65; Secondary: 51F99} 

\begin{abstract}  
  We show that a metric Jordan curve $\Gamma$ is \emph{bounded
    turning} if and only if there exists a \emph{weak-quasisymmetric}
  homeomorphism $\varphi\colon \mathsf{S}^1 \to \Gamma$.  
\end{abstract} 
\maketitle

\section{Introduction}
\label{sec:introduction}

A metric Jordan curve $\Gamma$ is \emph{bounded turning} (or
$C$-bounded turning) if there is
a constant $C\geq 1$ such that for each pair of points $x,y\in \Gamma$,
the arc of smaller diameter $\Gamma[x,y]\subset \Gamma$ between $x,y$
satisfies
\begin{equation}
  \label{eq:bt}
  \diam \Gamma[x,y]\leq C \abs{x-y}.
\end{equation}
Here and in the following, we denote metrics by the \emph{Polish
  notation}, i.e., by $\abs{x-y}$. A homeomorphism of
metric spaces $\varphi\colon X \to Y$ is called a
\emph{weak-quasisymmetry} (or $H$-weak-quasisymmetry), if there is a
constant $H\geq 1$ such that 
\begin{equation}
  \label{eq:w-qs}
  \abs{x-y} \leq \abs{x-z} \quad \Rightarrow \quad \abs{f(x) - f(y)} \leq H
  \abs{f(x) -f(z)},
\end{equation}
for all $x,y,z\in X$. In the present paper, we prove the following
theorem.

\begin{theorem}
  \label{thm:main}
  A metric Jordan curve $\Gamma$ is bounded turning if and only if there
  exists a weak-quasisymmetric homeomorphism
  $\varphi \colon \Se \to \Gamma$. 
\end{theorem}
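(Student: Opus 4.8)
The two implications have quite different flavours, so I treat them separately.

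\smallskip\noindent\emph{``$\Leftarrow$''.}
Suppose $\varphi\colon\Se\to\Gamma$ is an $H$-weak-quasisymmetry, and fix $x,y\in\Gamma$. Put $a=\varphi^{-1}(x)$, $b=\varphi^{-1}(y)$, let $\alpha\subset\Se$ be the shorter of the two arcs bounded by $a$ and $b$, and let $m\in\alpha$ be its midpoint. On the round circle one has the elementary inequalities $\abs{m-u}\le\abs{m-a}=\abs{m-b}$ for all $u\in\alpha$, and $\abs{a-m}\le\abs{a-b}$. Feeding the first into \eqref{eq:w-qs} gives $\abs{\varphi(m)-\varphi(u)}\le H\abs{\varphi(m)-x}$ for every $u\in\alpha$; feeding the second into \eqref{eq:w-qs} gives $\abs{x-\varphi(m)}\le H\abs{x-y}$. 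Hence $\varphi(\alpha)$ is contained in the ball of radius $H^{2}\abs{x-y}$ about $\varphi(m)$, so $\diam\varphi(\alpha)\le 2H^{2}\abs{x-y}$. Since $\varphi(\alpha)$ is one of the two subarcs of $\Gamma$ joining $x$ and $y$, the one of smaller diameter satisfies \eqref{eq:bt} with $C=2H^{2}$. I expect this direction to amount essentially to this computation.

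\smallskip\noindent\emph{``$\Rightarrow$''.}
Assume $\Gamma$ is $C$-bounded turning. The plan is to realise $\varphi$ as the limit of combinatorial approximations built from a nested sequence of partitions of $\Gamma$ into arcs (``tiles''). Choose $p_{0},q_{0}\in\Gamma$ with $\abs{p_{0}-q_{0}}\ge\tfrac12\diam\Gamma$; these split $\Gamma$ into the two level-$1$ tiles. The point of starting from a near-diametral pair is the observation that \emph{every proper subarc $\beta$ of a level-$1$ tile satisfies $\diam\beta\le 2\,\diam(\overline{\Gamma\setminus\beta})$} (because $\overline{\Gamma\setminus\beta}$ contains a whole level-$1$ tile, of diameter $\ge\tfrac12\diam\Gamma\ge\tfrac12\diam\beta$), so that $\beta$ is, up to a factor $2$, the smaller of the two subarcs bounded by its endpoints $u,v$; by \eqref{eq:bt} this yields
\[
  \diam\beta\le 2C\,\abs{u-v}
\]
for \emph{every} tile of the construction. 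With a geometric sequence $\eps_{n}=(200C)^{-(n-1)}\diam\Gamma$, I subdivide recursively: passing from level $n-1$ to level $n$, traverse each level-$(n-1)$ tile and insert a new vertex whenever the diameter accumulated since the previous vertex reaches $\eps_{n}$, then repair a possibly too small terminal piece by merging it with (or rebalancing it against) its neighbour. The displayed inequality is used twice. First, it forces this procedure to stop after finitely many steps: if not, the new vertices would accumulate at some point, and subarcs between far-apart such vertices would have vanishing endpoint distance but diameter $\ge\eps_{n}$, contradicting \eqref{eq:bt}. Second, it forces every level-$n$ tile to have diameter comparable to $\eps_{n}$ and consecutive level-$n$ vertices to be at distance comparable to $\eps_{n}$ (with constants depending only on $C$). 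Consequently these partitions have mesh tending to $0$, \emph{adjacent level-$n$ tiles have comparable diameters}, and one also obtains a \emph{separation estimate}: if two level-$n$ tiles $\gamma,\gamma'$ lie in a common level-$1$ tile with a nonempty union $\sigma$ of tiles between them, then $\dist(\gamma,\gamma')\ge\tfrac{1}{2C}\diam\sigma$, since a point of $\gamma$ and a point of $\gamma'$ are joined through $\sigma$ by the smaller of their two subarcs.

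\smallskip
Finally one transfers this tile structure to the circle: subdivide $\Se$ along the \emph{same} tree of subdivisions (a level-$(n-1)$ arc with $k$ children being cut into $k$ subarcs). Since every tile has at least two children, the arcs of $\Se$ also shrink to points, and the tile correspondence extends to a homeomorphism $\varphi\colon\Se\to\Gamma$. The remaining, and most delicate, step is to check that $\varphi$ is a weak-quasisymmetry with constant depending only on $C$. Given $x,y,z\in\Se$ with $\abs{x-y}\le\abs{x-z}$, one wants to compare the smallest tiles joining $x$ to $y$ and $x$ to $z$; the ``no-folding'' content of \eqref{eq:w-qs}, namely that $\varphi$ does not carry points of $\Se$ far from $x$ to points of $\Gamma$ close to $\varphi(x)$, is precisely what the separation estimate supplies, while the comparison of scales uses the comparability of adjacent tiles. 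The real obstacle is that $\Gamma$ need \emph{not} be doubling: the subdivisions have unbounded branching, the induced partitions of $\Se$ are very non-uniform from one level to the next, and the naive estimate ``a ball of radius $r$ meets boundedly many tiles'' is unavailable. One must therefore argue along chains of tiles of controlled combinatorial length, and handle carefully the case where $x$ and the competing point straddle a vertex at a scale much finer than that of the tile joining them. I expect this last part---making the combinatorics of the partitions interact correctly with \eqref{eq:w-qs} without any doubling hypothesis---to be the technical heart of the proof.
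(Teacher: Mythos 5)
Your ``$\Leftarrow$'' direction is correct (the paper obtains the same conclusion, with $C=\min\{2H,H^2\}$, by an even shorter computation), and your $\Gamma$-side construction is a legitimate variant of the paper's: where the paper passes to the diameter distance to reduce to the $1$-bounded turning case and then divides each arc into subarcs of \emph{equal} diameter (Lemma \ref{lem:prop_dia}, Lemma \ref{lem:dividing_arcs}, Lemma \ref{lem:divideG}), you keep the constant $C$, use the near-diametral pair $p_0,q_0$ to get $\diam\beta\leq 2C\abs{u-v}$ for every tile, and produce level-$n$ tiles of diameter comparable to $\eps_n$ by a greedy stopping-time argument. That part of your plan is sound and would deliver subdivisions of $\Gamma$ with essentially the properties of Lemma \ref{lem:divideG}.

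The genuine gap is in the transfer to the circle, which is exactly where the theorem lives. You subdivide a circle arc with $k$ children simply ``into $k$ subarcs'' and then defer the verification of weak-quasisymmetry, yourself calling the non-doubling case ``the technical heart''; that heart is missing, and the natural reading of your transfer (equal-length subarcs) in fact fails. Since all level-$n$ tiles of $\Gamma$ have diameter comparable to $\eps_n$ while the branching numbers are unbounded, a tile nested in a chain of high-branching parents corresponds to a circle interval whose length is far smaller than that of the level-$n$ intervals just across one of its endpoints; taking $x$ at that endpoint, $y$ the other endpoint of the tiny interval, and $z$ with $\lambda(x,z)=\lambda(x,y)$ on the other side of $x$, the ratio $\abs{\varphi(x)-\varphi(y)}/\abs{\varphi(x)-\varphi(z)}$ is unbounded, so no constant $H$ works. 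The paper's key idea is precisely a \emph{non-uniform} subdivision of each circle interval, with subinterval lengths $\abs{I}/4,\abs{I}/8,\dots$ decreasing geometrically toward the midpoint (Lemma \ref{lem:propIn}); this forces neighboring intervals of the same generation, regardless of parentage and branching, to have length ratio in $\{1/2,1,2\}$, and that property is then exploited through two quantitative estimates: any interval of $\Se$ has length within a factor $12$ of the largest subdivision interval it contains and is covered by two parent-level intervals (Lemma \ref{lem:sizeI_delta}), and the generations of the largest subdivision intervals inside two adjacent intervals of equal length differ by at most $4$ (Lemma \ref{lem:mn_est}). Weak-quasisymmetry follows from these in a few lines. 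Without some such specification of the circle subdivision together with proofs of the corresponding two estimates, your proposal does not establish the hard direction.
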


The same proof shows the following.

\begin{cor}
  A metric Jordan arc $A$ is bounded turning if and only if there is a
  weak-quasisymmetric homeomorphism $\varphi\colon [0,1] \to A$. 
\end{cor}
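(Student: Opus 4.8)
The plan is to prove the two implications separately, obtaining the forward direction from Theorem~\ref{thm:main} by doubling the arc into a closed curve.

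\emph{Sufficiency.} Suppose $\varphi\colon[0,1]\to A$ is an $H$-weak-quasisymmetric homeomorphism; I would argue exactly as in the curve case. Given $x=\varphi(s)$ and $y=\varphi(t)$ with $s<t$, the subarc of $A$ joining them is $\varphi([s,t])$. For $u\in[s,t]$ at least one of $|u-s|,|u-t|$ is $\le t-s=|s-t|$, so \eqref{eq:w-qs} applied to $(\varphi(s),\varphi(u),\varphi(t))$ or to $(\varphi(t),\varphi(u),\varphi(s))$ gives $|\varphi(u)-x|\le H|x-y|$ or $|\varphi(u)-y|\le H|x-y|$; in either case $|\varphi(u)-x|\le(H+1)|x-y|$. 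Hence $\diam\varphi([s,t])\le 2(H+1)|x-y|$, so $A$ is $2(H+1)$-bounded turning.

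\emph{Necessity.} Let $A$ be $C$-bounded turning with endpoints $p\ne q$ (the degenerate case is vacuous, a Jordan arc being by definition homeomorphic to $[0,1]$). Form the \emph{double} $\Gamma=A_1\cup A_2$ of two isometric copies of $A$ glued along $\{p,q\}$, with $d_\Gamma(z,w)=|z-w|_A$ when $z,w$ lie in a common copy and $d_\Gamma(z,w)=\min\{|z-p|_A+|w-p|_A,\ |z-q|_A+|w-q|_A\}$ otherwise. Reducing to the triangle inequality in $A$ (using also $|u-p|_A+|u-q|_A\ge|p-q|_A$) I would check that $d_\Gamma$ is a metric restricting to $|\cdot-\cdot|_A$ on each $A_i$, that it induces the circle topology so that $(\Gamma,d_\Gamma)$ is a metric Jordan curve, and that it is $C$-bounded turning: for $x,y$ in one copy the subarc lying inside that copy already has diameter $\le C\,d_\Gamma(x,y)$, and for $x\in A_1$, $y\in A_2$ with, say, $d_\Gamma(x,y)=|x-p|_A+|y-p|_A$, the arc $A_1[x,p]\cup A_2[p,y]$ through $p$ has diameter $\le C|x-p|_A+C|y-p|_A=C\,d_\Gamma(x,y)$ by bounded turning of $A$. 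Theorem~\ref{thm:main} then yields an $H$-weak-quasisymmetric homeomorphism $\psi\colon\Se\to\Gamma$, and—restricting \eqref{eq:w-qs} to triples inside the closed subarc $\alpha:=\psi^{-1}(A_i)$ and identifying $A_i$ isometrically with $A$—the map $\psi|_\alpha\colon\alpha\to A$ is an $H$-weak-quasisymmetric homeomorphism.

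It remains to turn $\alpha$ into $[0,1]$. The subarcs $\psi^{-1}(A_1)$ and $\psi^{-1}(A_2)$ of $\Se$ are complementary, so their angular lengths sum to $2\pi$; since $A_1$ and $A_2$ are interchangeable copies of $A$, I may choose $i$ so that $\alpha$ has angular length $\lambda\le\pi$. Parametrizing $\alpha$ by scaled arclength, $g\colon[0,1]\to\alpha$, gives $|g(s)-g(s')|=2\sin\bigl(\tfrac{\lambda}{2}|s-s'|\bigr)=\Phi(|s-s'|)$, and $\Phi$ is \emph{strictly increasing} on $[0,1]$ because $\tfrac{\lambda}{2}|s-s'|\le\tfrac{\pi}{2}$. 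Strict monotonicity of $\Phi$ gives $|s-s'|\le|s-s''|\iff|g(s)-g(s')|\le|g(s)-g(s'')|$, so the condition \eqref{eq:w-qs} transfers verbatim and $\varphi:=\psi|_\alpha\circ g\colon[0,1]\to A$ is $H$-weak-quasisymmetric, as desired.

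I expect this final step to be the crux. A subarc of $\Se$ longer than a semicircle is \emph{not} a monotone reparametrization of $[0,1]$ (its two endpoints can be closer than some pair of interior points), and weak-quasisymmetry—unlike quasisymmetry—is not stable under pre-composition with bi-Lipschitz homeomorphisms; passing to the \emph{shorter} of the two preimage arcs, which is legitimate precisely because $A_1$ and $A_2$ are the same metric space, is what makes the reparametrization preserve \eqref{eq:w-qs}. The verification that the double is a $C$-bounded turning metric Jordan curve is the other substantial piece, but it is routine.
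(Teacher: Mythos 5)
Your proposal is correct, but it takes a genuinely different route from the paper. The paper proves the corollary by simply rerunning the whole construction with $A$ in place of $\Gamma$ and $[0,1]$ in place of $\Se$ (``the same proof''): one subdivides the arc into subarcs of comparable diameter, builds the combinatorially matching subdivisions of $[0,1]$, and verifies weak-quasisymmetry exactly as in Sections 3--7 (the arc case is in fact slightly easier, since there is no wrap-around). You instead \emph{reduce} the arc case to the already-proved circle case by doubling: gluing two isometric copies of $A$ along the endpoints, checking that the resulting $(\Gamma,d_\Gamma)$ is a $C$-bounded turning metric Jordan curve, applying Theorem \ref{thm:main}, and restricting. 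Your two delicate points are handled correctly: exhibiting one arc of diameter $\leq C\,d_\Gamma(x,y)$ suffices for the bounded turning condition (the smaller-diameter arc does at least as well), and the reparametrization of $\alpha=\psi^{-1}(A_i)$ is legitimate because you may pick the copy whose preimage is at most a semicircle, so that the distance in $\Se$ between $g(s),g(s')$ is a strictly increasing function of $\abs{s-s'}$ and the order of distances in the source---which is all that \eqref{eq:w-qs} sees---is preserved; this correctly sidesteps the failure of weak-quasisymmetry under general bi-Lipschitz precomposition. (A small cosmetic mismatch: the paper equips $\Se$ with the arc-length metric $\lambda$, not the chordal one, so your $g$ is in fact a similarity and the factor $2\sin(\cdot)$ is unnecessary---but since weak-quasisymmetry depends only on the order of source distances, this changes nothing.) What each approach buys: the paper's repetition keeps the construction self-contained and gives the arc statement with the same machinery and constants; your reduction is shorter modulo Theorem \ref{thm:main}, isolates a reusable doubling construction, and makes explicit the (not entirely obvious) stability properties of weak-quasisymmetry needed to transport a parametrization from a subarc of the circle to $[0,1]$, at the cost of somewhat worse constants and the extra verification that the double is a metric Jordan curve.
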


\subsection{Background}
\label{sec:background}

The following notion is closely related to weak-quasi\-sym\-me\-try. A
homeomorphism $\varphi\colon X\to Y$ of metric spaces is called a
\emph{quasisymmetry} if there exists a homeomorphism $\eta\colon
[0,\infty) \to [0,\infty)$ such that
\begin{equation}
  \label{eq:qs}
  \abs{x-y} \leq t\abs{x-z} \quad \Rightarrow \quad \abs{\varphi(x)
    -\varphi(y)} \leq \eta(t) \abs{\varphi(x) - \varphi(z)},
\end{equation}
for all points $x,y,z\in X$ and $t\in [0,\infty)$.
General background
on (weak-)quasi\-sym\-me\-tries can be found in \cite{JuhAn}.

Every quasisymmetry is a weak-quasisymmetry (pick $H= \eta(1)$). While
the reverse does not hold in general, it is true in many practically
relevant situations. 
Recall that a metric space is \emph{doubling} if there is a constant
$N$, such that every ball of radius $r$ can be covered by at most $N$
balls of radius $r/2$. Note that every Jordan curve $\Gamma\subset
\R^n$ is doubling.

\begin{theorem}[{\cite[Theorem 10.19]{JuhAn}}]
  \label{thm:wqs_qs_doubling}
  If $X$ is {connected} and both $X,Y$ are
  {doubling}, then every weak-quasisymmetry $\varphi\colon X\to
  Y$ is quasisymmetric
\end{theorem}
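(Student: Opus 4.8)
The plan is to promote weak-quasisymmetry \eqref{eq:w-qs} — which is exactly \eqref{eq:qs} with the \emph{constant} control function $\eta\equiv H$, but only in the range $t\le1$ — to an honest $\eta$-quasisymmetry with $\eta$ depending only on $H$ and the doubling constants $N_X,N_Y$. First observe that $Y=\varphi(X)$ is connected, and that in a connected metric space the map $w\mapsto\abs{w-p}$ has connected image, so for every $p$ and every $s\le\sup_w\abs{w-p}$ there is a point at distance exactly $s$ from $p$; this ``interpolation of scales'' is the only way connectedness enters. For $x\in X$ and admissible $r>0$ set
\begin{equation*}
  L(x,r)=\sup\{\abs{\varphi x-\varphi w}:\abs{x-w}\le r\},\qquad \ell(x,r)=\inf\{\abs{\varphi x-\varphi w}:\abs{x-w}\ge r\}.
\end{equation*}
Applying \eqref{eq:w-qs} with a comparison point at distance exactly $r$ from $x$ gives at once $L(x,r)\le H\,\ell(x,r)$ and, for any $z$ with $\abs{x-z}=r$, $\ell(x,r)\le\abs{\varphi x-\varphi z}\le L(x,r)$; in particular $\abs{\varphi x-\varphi z}\asymp_H L(x,\abs{x-z})$, so \eqref{eq:qs} reduces to a one-variable assertion about $L(x,\lambda r)/L(x,r)$. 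I claim it suffices to establish the two \emph{scale estimates}: there are $K<\infty$ and $K_0>1$, depending only on $H,N_X,N_Y$, with
\[
  L(x,2r)\le K\,L(x,r)\qquad\text{and}\qquad \ell(x,2r)\ge K_0\,\ell(x,r)
\]
for all $x$ and all admissible $r$. Indeed, \eqref{eq:w-qs} handles \eqref{eq:qs} at $t=1$ (both ways, $\eta(1)=H$); iterating the first estimate gives $L(x,tr)\le K\,t^{\log_2 K}L(x,r)$ for $t\ge1$, hence \eqref{eq:qs} for $t\ge1$ with $\eta(t)\lesssim t^{\log_2 K}$; and iterating the second gives $\ell(x,\rho)\le K_0(\rho/r)^{\log_2 K_0}\ell(x,r)$ for $\rho\le r$, so that $L(x,\rho)\le H\ell(x,\rho)$ tends to $0$ relative to $\abs{\varphi x-\varphi z}\ge\ell(x,r)$, i.e. \eqref{eq:qs} for $t\le1$ with $\eta(t)\lesssim t^{\log_2 K_0}\to0$. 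A homeomorphism $\eta$ of $[0,\infty)$ majorising these two power bounds then does the job.

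The two scale estimates are the heart of the matter, and they are where the doubling hypotheses enter. For the upper estimate on $L$: cover $\bar B(x,2r)$ by at most $N_X^{2}$ balls $B(x_i,r/2)$ with centres in $\bar B(x,2r)$, so that $\varphi(\bar B(x,2r))\subset\bigcup_i\varphi(B(x_i,r/2))$; in each $B(x_i,r)$ fix, by interpolation of scales, a point $z_i$ with $\abs{x_i-z_i}=r/2$, whence \eqref{eq:w-qs} gives $\diam\varphi(B(x_i,r/2))\le 2H\abs{\varphi x_i-\varphi z_i}\le 2H\,L(x_i,r/2)$; finally chain $x$ to each $x_i$ through overlapping balls of this covering. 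The estimate for $\ell$ is the mirror argument run on the $Y$-side: $\varphi(\bar B(x,r))$ contains a ball about $\varphi x$ of radius $\asymp_H\ell(x,r)$, and it is the doubling of $Y$ (constant $N_Y$) that is relevant there.

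The delicate point — the reason this needs genuine work and not a one-line estimate — is closing the loop without circularity: one must bound $L(x_i,r/2)$ (respectively the analogous quantity in the $\ell$-argument) by a \emph{uniform} multiple of $L(x,r)$ (respectively $\ell(x,r)$), and I would do this by a simultaneous induction over all base points at a fixed scale, in which the \emph{other} space's doubling constant is what forbids the images of the pieces of a covering from spreading out too much. Thus the main obstacle is making the constants $K,K_0$ independent both of the scale $r$ and of the depth of iteration: weak-quasisymmetry controls only images of equidistant points, and converting that into control \emph{across} scales forces one to combine connectedness (comparison points at every intermediate scale) with two-sided doubling (covering numbers, plus non-spreading of images) inside an induction rather than a direct computation.
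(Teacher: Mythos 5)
Two things. First, be aware that the paper does not prove this statement at all: it is quoted verbatim from Heinonen \cite[Theorem 10.19]{JuhAn}, so your attempt can only be measured against the standard proof there, not against anything in this paper. Second, as a proof your text is a reduction, not an argument. The harmless part is correct: with $L(x,r)=\sup\{\abs{\varphi(x)-\varphi(w)}:\abs{x-w}\le r\}$ and $\ell(x,r)=\inf\{\abs{\varphi(x)-\varphi(w)}:\abs{x-w}\ge r\}$ one indeed has $L(x,r)\le H\,\ell(x,r)$ and $\ell(x,r)\le\abs{\varphi(x)-\varphi(z)}\le L(x,r)$ when $\abs{x-z}=r$, and if the two scale estimates $L(x,2r)\le K\,L(x,r)$ and $\ell(x,2r)\ge K_0\,\ell(x,r)$, $K_0>1$, held with constants depending only on $H,N_X,N_Y$, a power-type $\eta$ would follow by iteration.

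The gap is that these two estimates \emph{are} the theorem: the strict growth $K_0>1$ of $\ell$ is precisely what forces $\eta(t)\to0$ as $t\to0$, and you do not prove either estimate. You flag the problem yourself ("closing the loop without circularity": bounding $L(x_i,r/2)$ by a uniform multiple of $L(x,r)$) and defer it to an unspecified "simultaneous induction" that is never carried out; likewise, "$\varphi(\bar B(x,r))$ contains a ball of radius $\asymp_H\ell(x,r)$" gives no growth by itself, and a single application of \eqref{eq:w-qs} can never produce a factor strictly larger than $1$, so some counting argument across scales is unavoidable and is missing. (It is also not clear that the single-doubling form $\ell(x,2r)\ge K_0\ell(x,r)$ holds per scale; what the known arguments yield directly is growth after a bounded number of doublings, which would suffice for your reduction but still needs proof.) The standard proof avoids your induction entirely: given $t=\abs{x-y}/\abs{x-z}\le1$, use connectedness to pick points $x_1,\dots,x_k$ with $\abs{x-x_j}=2^j\abs{x-y}$ and $k\approx\log_2(1/t)$; two applications of \eqref{eq:w-qs} show that the images $\varphi(x_j)$ lie in the ball of radius $H\abs{\varphi(x)-\varphi(z)}$ about $\varphi(x)$ and are pairwise $H^{-2}\abs{\varphi(x)-\varphi(y)}$-separated, so the doubling of $Y$ bounds $k$ by a multiple of $\log\bigl(C\abs{\varphi(x)-\varphi(z)}/\abs{\varphi(x)-\varphi(y)}\bigr)$, which rearranges to $\abs{\varphi(x)-\varphi(y)}\le C\,t^{\alpha}\abs{\varphi(x)-\varphi(z)}$; a companion chaining argument using the doubling of $X$ and connectedness handles $t\ge1$. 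Until you supply an argument of this kind (or simply cite Heinonen, as the paper does), the heart of the statement remains unproved.
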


Definition (\ref{eq:qs}) for quasisymmetry appears in
\cite{TV}. In earlier work (for example in \cite{AhlBeurext},
\cite{MR27:4921}) quasisymmetry is defined by
(\ref{eq:w-qs}); it is however only applied to maps where the two
notions agree by the theorem cited above.

\smallskip
A \emph{quasicircle} is the image of the unit circle $\Se$ by a
quasisymmetric map. 
Ahlfors has given in \cite{MR27:4921} the following geometric
characterization for planar 
quasicircles. For a Jordan curve $\Gamma\subset \C$ it holds 
\begin{equation*}
  \text{$\Gamma$ is a quasicircle} 
  \Leftrightarrow 
  \Gamma \text{ is bounded turning.}
\end{equation*}
Tukia and V\"{a}is\"{a}l\"{a} generalize this characterization to all
metric 
Jordan curves in \cite{TV}, namely for a metric Jordan curve $\Gamma$ it
holds 
\begin{equation*}
  \text{$\Gamma$ is a quasicircle} 
  \Leftrightarrow 
  \Gamma \text{ is bounded turning and doubling.}
\end{equation*}

If we call the weak-quasisymmetric image of the unit circle $\Se$ a
\emph{weak-quasicircle}, then Theorem \ref{thm:main} may be expressed as
follows. For a Jordan curve $\Gamma$ it holds
\begin{equation*}
    \text{$\Gamma$ is a weak-quasicircle} \Leftrightarrow \Gamma \text{ is
    bounded turning.}
\end{equation*}
It is easy to see that the quasisymmetric image of a doubling space is
doubling (see \cite[Theorem 10.18]{JuhAn}). Thus one recovers from
Theorem \ref{thm:main} together with Theorem \ref{thm:wqs_qs_doubling}
the Tukia-V\"{a}is\"{a}l\"{a} characterization of quasicircles. 

The first example of a bounded turning circle that is not a
quasicircle was given by Tukia-V\"{a}is\"{a}l\"{a} in \cite[Example
4.12]{TV}. A simple catalog $\mathcal{S}$ of bounded turning circles
that includes a 
bi-Lipschitz copy of any bounded turning circle is given in
\cite{qc-bt-biLip}. A curve $S\in \mathcal{S}$ from this catalog is
doubling, i.e., a quasicircle, if and only if a simple condition is
satisfied. 

\subsection{Organization of the paper}
\label{sec:organization-paper}

The ``if''-part of Theorem \ref{thm:main} is trivial. Namely let
$\varphi\colon \Se \to \Gamma$ be $H$-weak-quasisymmetric. Consider
arbitrary points $a,b\in\Se$, and let
$[a,b]\subset \Se=[0,1]/\{0\sim 1\}$ be the arc between $a$ and
$b$ of smaller diameter. Then for points $x,y\in [a,b]$
it holds
\begin{align}
  \label{eq:wqs_then_bt}
  &\abs{\varphi(x) -\varphi(y)} \leq \abs{\varphi(x) -\varphi(a)} +
  \abs{\varphi(a) -\varphi(y)} \leq 2H \abs{\varphi(a) -\varphi(b)}. 
  \intertext{or, assuming $a\leq x\leq y\leq b$,}
  \notag
  &\abs{\varphi(x) -\varphi(y)} \leq H\abs{\varphi(x) -\varphi(b)} \leq
  H^2 \abs{\varphi(a) -\varphi(b)}.
\end{align}
Therefore $\diam \varphi([a,b]) \leq C \abs{\varphi(a) -\varphi(b)}$,
where $C=\min\{2H, H^2\}$. Thus $\Gamma$ is $C$-bounded turning. 

\smallskip
The rest of this paper concerns the construction of a
weak-quasisymmetry $\varphi \colon \Se\to  \Gamma$,
for a given bounded turning circle $\Gamma$. In Section
\ref{sec:preliminaries} we show that we can restrict our attention to
the case when $\Gamma$ is $1$-bounded turning. Also an elementary
lemma about dividing arcs into subarcs of equal diameter is proved. 

In Section \ref{sec:dividing_gamma} we divide $\Gamma$ into arcs
$\Gamma^n_1, \dots, \Gamma^n_{N^n}$ (for each $n\in \N$). Two arcs
$\Gamma^n_i,\Gamma^n_j$ have 
roughly the same diameter. Each arc $\Gamma^{n+1}_i$ is contained in a
(unique) arc $\Gamma^n_j$, thus the sets $\G^n=\{\Gamma^n_j \mid
j=1,\dots, N^n\}$ form \emph{subdivisions} of $\Gamma$. 

In Section \ref{sec:dividing-unit-circle} we divide the unit circle
$\Se$ into intervals $I^n_1,\dots, I^n_{N^n}$. Neighboring intervals
$I^n_j, I^n_{j+1}$ have roughly the same diameter. Furthermore the
combinatorics of the subdivisions of $\Gamma$ and $\Se$ is the same,
namely $\Gamma^{n+1}_i \subset \Gamma^n_j \Leftrightarrow I^{n+1}_i
\subset I^n_j$. 

The map $\varphi\colon \Se \to \Gamma$ is defined in Section
\ref{sec:weak-quasisymmetry}, by mapping endpoints of
intervals $I^n_j$ to endpoints of corresponding arcs $\Gamma^n_j$.   

Section \ref{sec:estimating-intervals} and Section
\ref{sec:estimating-order} are preparations to prove the
weak-qua\-si\-sym\-me\-try of $\varphi$. Namely we show, that the diameter of
any interval in $\Se$ can be estimated in terms of the subdivision-intervals
$I^n_j$. Then we show that if $I^n_i, I^m_j$ are the largest
subdivision-intervals contained in adjacent intervals of the same
length, then $\abs{m-n}$ is bounded. 

Section \ref{sec:proof-theorem} finishes the proof of Theorem
\ref{thm:main}. 

\subsection{Notation}
\label{sec:notation}

The unit circle is denoted by $\Se$, which we identify with
$[0,1]/\{0 \sim 1\}$. The unit circle is thus equipped with the
orientation inherited from the real line. We always assume that $\Se$
is equipped with the arc-length metric denoted by $\lambda(s,t)$,
i.e., if $0\leq s\leq t\leq 1$, 
then 
\begin{equation}
  \label{eq:def_lambda}
  \lambda(s,t) = \min\{\abs{t-s} , \abs{s+ (1-t)}\}. 
\end{equation}
The diameter with respect to this metric 
of an interval $I \subset 
\Se = [0,1]/\{0\sim 1\}$ is denoted by $\abs{I}$. 
Note that $\abs{I}$ equals the Lebesgue measure of $I$ in the case when
$\abs{I}\leq \abs{\Se\setminus I}$. 

\section{Preliminaries}
\label{sec:preliminaries}


We first show that we can restrict our attention to $1$-bounded turning
circles. More precisely, we show that any bounded turning circle is
bi-Lipschitz equivalent to a $1$-bounded turning circle. 

Then we prove that any arc can be divided into subarcs of equal
diameter. 
\subsection{Diameter distance}
\label{sec:diameter-distance}

Given any metric Jordan curve or Jordan arc $\Gamma$ we define the
\emph{diameter distance} on $\Gamma$ by 
\begin{equation}
  \label{eq:def_dia}
  \dia(x,y) := \diam \Gamma[x,y],
\end{equation}
for all $x,y\in \Gamma$, where $\Gamma[x,y]\subset \Gamma$ is the arc
of smaller diameter between $x,y$. We record some properties of
$\dia$.

\begin{lemma}
  \label{lem:prop_dia}
  \mbox{}
  \begin{enumerate}
  \item 
    \label{item:dd_metric}
    $\dia$ is a metric on $\Gamma$.
  \item 
    \label{item:dd_biL}
    $\Gamma$ is $C$-bounded turning if and only if $\id\colon
    \Gamma \to (\Gamma, \dia)$ is $C$-bi-Lipschitz.
  \item 
    \label{item:dd_diam}
    For any arc $A\subset \Gamma$ it holds
    \begin{equation*}
      \diam_{\dia}A = \diam A. 
    \end{equation*}
    Here $\diam_{\dia}$ denotes the diameter with respect to $\dia$. 
  \item 
    \label{item:dd_1bt}
    $(\Gamma,\dia)$ is $1$-bounded turning. 
  \end{enumerate}
\end{lemma}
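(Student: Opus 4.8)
The plan is to verify the four items essentially in the order stated, since each builds on the previous and the heart of the matter is item~\eqref{item:dd_metric}.

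For item~\eqref{item:dd_metric}, symmetry is immediate from the symmetric definition of $\Gamma[x,y]$, and $\dia(x,y)=0$ forces $\Gamma[x,y]=\{x\}=\{y\}$, so $x=y$; the only real content is the triangle inequality $\dia(x,z)\le \dia(x,y)+\dia(y,z)$. Here I would argue topologically on the Jordan curve: write $\Gamma[x,z]$ for the smaller-diameter arc from $x$ to $z$. If $y\in\Gamma[x,z]$, then $\Gamma[x,z]$ is covered by the two subarcs from $x$ to $y$ and from $y$ to $z$ that it contains; each of these is one of the two arcs between its endpoints, so its diameter is at least the diameter of the smaller one, giving $\diam\Gamma[x,z]\le \diam(\text{subarc }x\text{ to }y)+\diam(\text{subarc }y\text{ to }z)$, and each subarc-diameter is $\ge$ the corresponding $\dia$-value only in the wrong direction --- so I must be careful: actually $\diam$ of the subarc contained in $\Gamma[x,z]$ is $\ge \dia(x,y)$ is false; rather that subarc \emph{is} a candidate arc between $x$ and $y$, hence its diameter is $\ge \dia(x,y)$ only if it is the larger one. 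The clean way: $\Gamma[x,z]\subset \Gamma[x,y]\cup\Gamma[y,z]$ whenever $y\in\Gamma[x,z]$ is \emph{not} automatic either. The correct case analysis is: for the three points $x,y,z$ on the circle, one of them lies on the smaller arc between the other two, or one of the points lies on the larger arc; in the former case, say $y\in\Gamma[x,z]$, then the two components of $\Gamma[x,z]\setminus\{y\}$ are arcs joining $x,y$ and $y,z$ respectively and they are the smaller-diameter arcs (since they are subsets of $\Gamma[x,z]$, which has diameter $\le$ that of the complementary arc, one checks the complementary arc for each pair is even larger), so $\dia(x,z)=\diam\Gamma[x,z]\le \diam(\text{first component})+\diam(\text{second component})=\dia(x,y)+\dia(y,z)$; in the remaining case one verifies the inequality directly, typically it is even easier because one of the terms on the right is already $\ge\diam\Gamma$. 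I expect this case analysis --- pinning down which subarc realizes the minimum --- to be the main obstacle, though it is elementary.

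Item~\eqref{item:dd_biL} is then a direct unwinding of definitions: for $x,y\in\Gamma$ one has $\abs{x-y}\le\diam\Gamma[x,y]=\dia(x,y)$ always (the arc contains both points), so $\id$ never decreases distances by more than a factor, and the bounded turning inequality \eqref{eq:bt} is precisely $\dia(x,y)=\diam\Gamma[x,y]\le C\abs{x-y}$; so $C$-bounded turning is equivalent to $\abs{x-y}\le\dia(x,y)\le C\abs{x-y}$, i.e.\ to $\id\colon\Gamma\to(\Gamma,\dia)$ being $C$-bi-Lipschitz.

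For item~\eqref{item:dd_diam}, given an arc $A\subset\Gamma$ and $x,y\in A$, the arc $A$ itself joins $x$ to $y$, so $\dia(x,y)=\diam\Gamma[x,y]\le\diam A$; taking the supremum over $x,y\in A$ gives $\diam_{\dia}A\le\diam A$. Conversely $\diam\Gamma[x,y]\ge\abs{x-y}$ shows $\diam_{\dia}A\ge\diam A$ by taking the sup, so equality holds. Finally item~\eqref{item:dd_1bt}: by item~\eqref{item:dd_diam} applied to $A=\Gamma[x,y]$ (the smaller-diameter arc, which is the same whether measured by $\abs{\cdot-\cdot}$ or, as one checks, by $\dia$ since diameters agree), $\diam_{\dia}\Gamma[x,y]=\diam\Gamma[x,y]=\dia(x,y)$, which is exactly the $1$-bounded turning condition \eqref{eq:bt} for the metric $\dia$; one small point to address is that the ``arc of smaller diameter'' is the same arc in both metrics, which follows from item~\eqref{item:dd_diam} since it says the $\dia$-diameter of any subarc equals its original diameter.
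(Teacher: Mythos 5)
Your handling of items (\ref{item:dd_biL}), (\ref{item:dd_diam}) and (\ref{item:dd_1bt}) is correct and essentially identical to the paper's proof: the two inequalities $\abs{x-y}\le\dia(x,y)$ and $\dia(x,y)\le\diam A$ give (\ref{item:dd_diam}), item (\ref{item:dd_biL}) is the same unwinding of definitions, and (\ref{item:dd_1bt}) follows from (\ref{item:dd_diam}); your extra remark that the smaller-diameter arc is the same in both metrics is a point the paper passes over silently. The one real difference is item (\ref{item:dd_metric}): the paper simply declares it elementary, whereas you sketch the triangle inequality by cases. Your Case 1 ($y\in\Gamma[x,z]$) is sound, including the parenthetical justification that each component of $\Gamma[x,z]\setminus\{y\}$ is a smaller-diameter arc between its endpoints (its complementary arc contains the larger arc between $x$ and $z$). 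Your dispatch of the remaining case, however, is misstated: if $y$ lies on the larger arc $Q$ between $x$ and $z$, it is not true in general that one of $\dia(x,y),\dia(y,z)$ is already $\ge\diam\Gamma$ (nor even $\ge\diam\Gamma[x,z]$), since both minima may be realized by the two pieces of $Q$ on either side of $y$. The case still goes through because those two pieces cover $Q$ and share the point $y$, so their diameters sum to at least $\diam Q\ge\diam\Gamma[x,z]$. A cleaner, case-free argument you may prefer: $\Gamma[x,y]\cup\Gamma[y,z]$ is connected and contains $x$ and $z$, hence contains one of the two arcs between them; that arc has diameter at most $\dia(x,y)+\dia(y,z)$, while $\dia(x,z)$ is the minimum of the two arc diameters, so $\dia(x,z)\le\dia(x,y)+\dia(y,z)$ with no case analysis.
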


\begin{proof}

(\ref{item:dd_metric}) is elementary. 

To prove (\ref{item:dd_diam}), first observe that for all $x,y\in A$,
$|x-y|\le\dia(x,y)$,  
so $\diam A\le\diam_{\dia}A$.  Next, for all $x,y\in A$,
$\dia(x,y)\le\diam A$, so $\diam_{\dia} A \le\diam A$. 

\smallskip
In the following $\Gamma[x,y]\subset \Gamma$ will always denote the
arc of smaller diameter between points $x,y\in \Gamma$.
Property (\ref{item:dd_1bt}) follows directly from (\ref{item:dd_diam}), since
$\dia(x,y)=\diam\Gamma[x,y]=\diam_{\dia}\Gamma[x,y]$ for all
$x,y\in \Gamma$. 

\smallskip
It remains to establish (\ref{item:dd_biL}).  If $\Gamma$ is $C$-bounded
turning, then for all
$x,y\in\Gamma$ 
$$
  \dia(x,y)=\diam(\Gamma[x,y])\le C\, |x-y| \le C\,\dia(x,y).
$$
Thus the identity map $\id\colon \Gamma \to (\Gamma,\dia)$ is
$C$-bi-Lipschitz.  Conversely, if this map is $C$-bi-Lipschitz, 
then for all $x,y\in\Gamma$ 
$$
  \diam(\Gamma[x,y])=\diam_{\dia}(\Gamma[x,y])=\dia(x,y)\le C\, |x-y|.
$$
Therefore $(\Gamma,\abs{\cdot})$ is $C$-bounded turning.
\end{proof}

It is elementary that postcomposing a $H$-weak-quasisymmetry with an
$L$-bi-Lipschitz map yields a $HL^2$-weak-quasisymmetry. 

Assume we have constructed for a given bounded turning circle $\Gamma$
a weak-quasisymmetry $\varphi\colon \Se\to (\Gamma,\dia)$. Then the
composition $\Se \xrightarrow{\varphi} (\Gamma,\dia) \xrightarrow{\id}
\Gamma$ is the desired weak-quasisymmetric parametrization of
$\Gamma$.   
Thus to prove Theorem \ref{thm:main} it is enough to construct a
weak-quasisymmetry $\varphi\colon \Se\to \Gamma$ for any $1$-bounded
turning circle $\Gamma$. 


\subsection{Dividing arcs}  \label{s:dividing-arcs} 
Here we prove that any metric Jordan arc can be divided into any given number of subarcs each having exactly the same diameter.

The problem of finding points on a metric Jordan arc such that
consecutive points are at the same distance is a non-trivial problem.
In 1930 Menger gave a proof \cite[p.\ 487]{MR1512632}, that is short, 
simple, and natural; but wrong.  It was proved for arcs in Euclidean space
in \cite{61.0634.05}, and in the general case (indeed in more
generality) in \cite[Theorem 3]{66.0898.02}; see also
\cite{MR678135}. 

For the case at hand, i.e., for bounded turning arcs, it suffices to
find subarcs that have equal diameter.  We give the following
elementary proof for this problem. 

\begin{lemma}   \label{lem:dividing_arcs}
  Let $A$ be a metric Jordan arc and $N\geq 2$ an integer. Then we can divide $A$ into $N$ subarcs of equal diameter.
\end{lemma}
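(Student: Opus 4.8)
The plan is to use a connectedness/intermediate-value argument. Parametrize the arc as $A = \gamma([0,1])$ with $\gamma$ a homeomorphism. For a point $t \in [0,1]$, consider the function $g(t) = \diam \gamma([0,t])$. This is continuous (since $\gamma$ is uniformly continuous on the compact interval $[0,1]$, small changes in $t$ change the image set by a small amount in Hausdorff distance, hence change the diameter by a small amount), non-decreasing, with $g(0) = 0$ and $g(1) = \diam A =: D$. By the intermediate value theorem we can pick $0 = t_0 \le t_1 \le \dots \le t_N = 1$ with $g(t_k) = kD/N$. However, this does NOT give subarcs of equal diameter: the arc $\gamma([t_{k-1}, t_k])$ has a diameter that is in general strictly smaller than $g(t_k) - g(t_{k-1})$, because the diameter of the whole initial segment need not be realized within that last piece. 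So the naive monotone approach fails, and the real argument must be more careful.

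The fix is to do it one cut at a time with a fixed target and iterate, or — cleaner — to set up a single continuous map from a simplex and invoke a topological fixed-point/covering argument. I would take the iterative route. First prove the case $N = 2$: I want a point $c \in A$ splitting $A$ into two subarcs of equal diameter. Let $a, b$ be the endpoints of $A$. For $p \in A$ write $A[a,p]$ and $A[p,b]$ for the two complementary subarcs. Define $h(p) = \diam A[a,p] - \diam A[p,b]$. As $p$ runs monotonically from $a$ to $b$ along $A$, $\diam A[a,p]$ increases continuously from $0$ to $D$ and $\diam A[p,b]$ decreases continuously from $D$ to $0$; hence $h$ is continuous and strictly increasing along the arc from $h = -D < 0$ at $p=a$ to $h = +D > 0$ at $p = b$, so there is a unique $c$ with $h(c) = 0$, i.e.\ $\diam A[a,c] = \diam A[c,b]$. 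Note both these common diameters are then $\le D$, and this is all we need; we do not claim they equal $D/2$.

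For general $N$, I would like to say "bisect, then bisect each half," but bisection only yields $N$ a power of $2$, and more importantly the two halves produced by the $N=2$ step need not have equal diameter to the two halves of a different split — so I instead argue directly for an $N$-fold division by a continuity argument on the configuration space of cut points. Fix the parametrization and consider ordered tuples $0 = t_0 \le t_1 \le \dots \le t_N = 1$; this parameter space is a simplex $\Delta^{N-1}$. Let $d_k = \diam \gamma([t_{k-1}, t_k])$, each a continuous function on the simplex. I want a point where $d_1 = d_2 = \dots = d_N$. Consider the map $F\colon \Delta^{N-1} \to \R^{N-1}$, $F = (d_1 - d_2,\, d_2 - d_3,\, \dots,\, d_{N-1} - d_N)$; I need a zero. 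On the boundary faces of the simplex (where some $t_{k-1} = t_k$, forcing $d_k = 0$) the map $F$ has controlled sign behavior — e.g.\ collapsing the $k$-th subarc makes $d_k = 0 \le d_j$ for all $j$ — and a degree/Poincaré–Miranda-type argument then forces a zero in the interior. The main obstacle is making this boundary sign analysis precise: one must check that $F$ restricted to $\partial\Delta^{N-1}$ is never zero (which is false if two adjacent subarcs are both collapsed, so one actually works on the open simplex or perturbs) and has the right winding, or else replace the topological argument by a clean induction: having divided $A$ into $N-1$ pieces of equal diameter $\delta$, slide a single new cut point to split one piece while monitoring all $N$ diameters via a one-parameter continuity argument, choosing which piece to subdivide so that the common value can be decreased to accommodate the extra piece. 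I expect the bookkeeping in this induction — showing the common diameter can always be lowered to the required value without some piece's diameter getting "stuck" — to be the delicate part of the write-up; the underlying idea (monotone continuous quantities plus the intermediate value theorem) is elementary, exactly as the paragraph preceding the lemma promises.
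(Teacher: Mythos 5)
Your proposal correctly diagnoses why the naive ``equal increments of $\diam\gamma([0,t])$'' idea fails, and your $N=2$ step is essentially the paper's (intermediate value theorem applied to $p\mapsto \diam A[a,p]-\diam A[p,b]$; note only that this function is merely non-decreasing, so your claim of a \emph{unique} $c$ is unjustified, though harmless). But for general $N$ you do not give a proof: you offer two strategies and explicitly leave the decisive step of each unresolved. The Poincar\'e--Miranda/degree route founders exactly where you say it does (the map can vanish on boundary faces where adjacent cut points collide, and you do not supply the perturbation or boundary analysis that would repair this), and the inductive ``add one cut and lower the common value'' route is precisely the bookkeeping you admit you cannot yet control --- the danger that some piece's diameter gets ``stuck'' is real, because shrinking an interval need not strictly decrease its diameter.

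The idea that is missing, and that the paper uses to kill exactly this ``stuck'' phenomenon, is a perturbation of the metric: after reducing (via the diameter distance, Lemma \ref{lem:prop_dia}) to the case $d(s,t)=\diam[s,t]$ on $[0,1]$, one sets $d_\epsilon(s,t)=d(s,t)+\epsilon\abs{t-s}$, so that $\diam_\epsilon$ becomes \emph{strictly} increasing under strict inclusion of intervals. With this strict monotonicity, one minimizes over the compact simplex of cut tuples the discrepancy $\varphi(\mathbf{s})=\max_i\diam_\epsilon[s_i,s_{i+1}]-\min_j\diam_\epsilon[s_j,s_{j+1}]$; if the minimum were positive, equalizing a pair of adjacent intervals of unequal $d_\epsilon$-diameter (your $N=2$ step) produces, thanks to strictness, a new common value strictly between the old min and max, and doing this at all maximal intervals strictly decreases $\varphi$ below its minimum --- a contradiction. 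Finally one lets $\epsilon_n\searrow 0$ and passes to a convergent subsequence of the cut points. Your sketch contains neither the $\epsilon$-perturbation nor any substitute for it, and without it both of your proposed routes have a genuine gap at their central step.
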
 
\begin{proof}%
We may assume that $A$ is the unit interval $[0,1]$ equipped with some metric $d$.  We claim that there are points $0=s_0 < s_1 < \dots < s_{N-1} < s_N=1$ such that
\begin{equation*}
  \diam[s_0,s_1] = \diam [s_1,s_2] = \dots = \diam[s_{N-1},s_N]
\end{equation*}
where $\diam$ denotes diameter with respect to the metric $d$.  When
$N=2$ this follows by applying the intermediate value theorem to the
function $[0,1]\ni s\mapsto \diam [0,s] - \diam[s,1]$. 

According to Lemma \ref{lem:prop_dia} (\ref{item:dd_diam}), we may
measure the diameter with respect to the diameter distance. 
Thus, using Lemma \ref{lem:prop_dia} (\ref{item:dd_1bt}), we may
assume that $A$ is $1$-bounded turning, i.e., that
for any $[s,t]\subset [0,1]$ 
\begin{equation} \label{eq:d_diam}
  d(s,t) = \diam [s,t] \,.
\end{equation}

\smallskip
Next we modify $d$ to get a metric $d_\epsilon$ that is \emph{strictly increasing} in the sense that
\begin{equation} \label{eq:de_strictly_inc}
  [s,t] \subsetneq [s',t'] \subset[0,1] \implies d_\epsilon(s,t) <  d_\epsilon(s',t') \,.
\end{equation}
The crucial point here is the \emph{strict} inequality, which need not hold in general.
To this end, fix $\epsilon>0$ and for all $s,t\in [0,1]$ set
\begin{equation*}
  d_\epsilon(s,t) := d(s,t) +\epsilon\abs{t-s} \,.
\end{equation*}
Then from \eqref{eq:d_diam} it follows that
\begin{equation*}
  \diam_\epsilon [s,t] = \diam[s,t] + \epsilon\abs{t-s} =
  d_\epsilon(s,t) \,,
\end{equation*}
where $\diam_\epsilon$ denotes diameter with respect to
$d_\epsilon$.  This immediately implies \eqref{eq:de_strictly_inc}.

\smallskip
We now show that $[0,1]$ can be divided into $N$ subintervals of
equal $d_\epsilon$-diameter.
Consider the compact set $S:=\{\mathbf{s}=(s_1,\dots, s_{N-1}) \mid 0\leq s_1 \leq
\dots \leq s_{N-1}\leq 1\}$. Set $s_0:=0, s_N:=1$. The function
$\varphi \colon S\to \R$ defined by
\begin{equation*}
  \varphi(\mathbf{s}) := \max_{0\leq i\leq N-1} \diam_\epsilon [s_i,s_{i+1}] -
  \min_{0\leq j\leq N-1} \diam_\epsilon [s_j,s_{j+1}]
\end{equation*}
assumes a minimum on $S$. If this minimum is zero, we are
done. Otherwise, there are adjacent intervals $[s_{i-1}, s_{i}],
[s_{i}, s_{i+1}]$ that have different $d_\epsilon$-diameter. Using the
intermediate value theorem as before, we can find $s'_i\in [s_{i-1},
s_{i+1}]$ such that $\diam_\epsilon [s_{i-1},s'_i] =
\diam_\epsilon[s'_i, s_{i+1}]$. Then from (\ref{eq:de_strictly_inc}) it
follows that
\begin{equation*}
  \min_{0\leq j < N} \diam_\epsilon [s_j,s_{j+1}]
  <
  \diam_\epsilon [s_{i-1},s'_i]
  =
  \diam_\epsilon[s'_i, s_{i+1}]
  <
  \max_{0\leq i < N} \diam_\epsilon [s_i,s_{i+1}].
\end{equation*}
Applying this procedure to all subintervals of maximal
$d_\epsilon$-diameter we obtain a strictly smaller
minimum for the function $\varphi$, which is impossible. Thus the minimum must be zero,
and so we can subdivide $[0,1]$ into $N$ subintervals of equal
$d_\epsilon$-diameter.

\smallskip
Consider now a sequence $\epsilon_n\searrow 0$, as $n\to
\infty$. Let $s_1^n< \dots < s_{N-1}^n$ be the points that divide
$[0,1]$ into $N$
subintervals of equal diameter with respect to $d_{\epsilon_n}$. We
can assume that for all $1\leq j < N$, all points $s^n_j$ converge
to $s_j$ as $n\to\infty$. It follows that
for all $1\leq i,j < N$,
\begin{equation*}
  \diam[s_i,s_{i+1}] = \lim_{n\to\infty} \diam_{\epsilon_n} [s^n_i,s^n_{i+1}] =
  \lim_{n\to\infty} \diam_{\epsilon_n} [s^n_j,s^n_{j+1}] = \diam[s_j, s_{j+1}]
\end{equation*}
as desired.
\end{proof}%

The previous lemma is also true for metric Jordan curves $\Gamma$.  In
this case we are free to choose any point in $\Gamma$ to be an
endpoint of one of the subarcs.

\section{Dividing $\Gamma$}
\label{sec:dividing_gamma}

Consider a $1$-bounded turning metric Jordan curve $\Gamma$. 
We fix a point $a_0\in \Gamma$, and an orientation of $\Gamma$. 

For each $n\in \N$ we will divide $\Gamma$ into arcs
$\Gamma^n_1, \dots, \Gamma^n_{N^n}$, labeled consecutively on $\Gamma$,
such that $a_0$ is the common 
endpoint of $\Gamma^n_1, \Gamma^n_{N^n}$. The set of these arcs is
denoted by $\G^n$. Here and in the following the upper index $n$ will
denote the order of the subdivision. In particular $N^1, N^2, \dots,
N^n, \dots$ will be some (increasing) sequence of positive integers,
not a geometric sequence. 
\begin{lemma}
  \label{lem:divideG}
  There are divisions $\G^n$ of $\Gamma$ as above with the following
  properties. 
  \begin{enumerate}
  \item 
    \label{item:G1}
    $\G^{n+1}$ is a \emph{subdivision} of $\G^n$. This
    means that every $\Gamma^{n+1}\in \G^{n+1}$ is 
    contained in a (unique) $\Gamma^n\in \G^n$.
  \item 
    \label{item:G2}
    The diameters of the arcs of the $n$-th subdivision are
    comparable, more precisely
    \begin{equation*}
      \frac{1}{2} \leq \frac{\diam\Gamma\phantom{'}}{\diam \Gamma'} \leq 2,
    \end{equation*}
    for all $\Gamma, \Gamma' \in \G^n$.
  \item 
    \label{item:G3}
    The diameters of the $n$-th and the $(n+1)$-th subdivision are
    comparable, more precisely 
    \begin{equation*}
      \frac{1}{16}\diam \Gamma^n \leq \diam \Gamma^{n+1} \leq
      \frac{1}{4} \diam \Gamma^n,
    \end{equation*}
    for all $\Gamma^{n+1}\in \G^{n+1}$ and $\Gamma^n\in \G^n$. 
  \end{enumerate}
\end{lemma}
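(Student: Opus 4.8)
The plan is to construct the subdivisions $\G^n$ recursively, using Lemma~\ref{lem:dividing_arcs} as the engine at each step. At each stage, given $\G^n$, I would refine each arc $\Gamma^n_j\in\G^n$ individually into subarcs of equal diameter, choosing the number of pieces arc-by-arc so that all resulting pieces land in the target window dictated by property~(\ref{item:G3}). Concretely, fix a scaling parameter; I would like $\diam\Gamma^{n+1}$ to be roughly $\tfrac18\diam\Gamma^n$, and the slack between $\tfrac1{16}$ and $\tfrac14$ is exactly what lets the integer number of subdivisions absorb the fact that different arcs of $\G^n$ have different diameters. So for each $\Gamma^n_j$ I would pick the integer $k_j\ge 2$ for which $\tfrac1{8}\diam\Gamma^n_j \le \tfrac1{k_j}\diam\Gamma^n_j$ lies in an appropriate range, divide $\Gamma^n_j$ into $k_j$ equal-diameter subarcs by Lemma~\ref{lem:dividing_arcs}, and declare these to be the arcs of $\G^{n+1}$ contained in $\Gamma^n_j$. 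This automatically gives property~(\ref{item:G1}). Since a subarc of equal diameter has diameter $\tfrac1{k_j}\diam\Gamma^n_j$, and $\Gamma$ is $1$-bounded turning so that for any arc $A\subset\Gamma[x,y]$ one still controls diameters cleanly, the choice of $k_j$ is what must be calibrated to make (\ref{item:G3}) hold with the stated constants.

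The base case needs its own argument: I would first choose $N^1$ large enough and divide $\Gamma$ into $N^1$ arcs of equal diameter by the curve version of Lemma~\ref{lem:dividing_arcs} (with $a_0$ as a common endpoint), noting that $N^1$ equal-diameter pieces of a $1$-bounded turning curve automatically satisfy (\ref{item:G2}) — in fact with ratio exactly $1$ at level $1$. The real content is the inductive step. Assuming $\G^n$ satisfies (\ref{item:G2}), so $\tfrac12\le \diam\Gamma^n_i/\diam\Gamma^n_j\le 2$, write $D=\max_j\diam\Gamma^n_j$; then every $\diam\Gamma^n_j\in[D/2,D]$. I would set the target diameter for level $n+1$ to be, say, $t:=D/8$ (a single number, the same for all arcs), and for each $j$ choose $k_j$ so that $\tfrac1{k_j}\diam\Gamma^n_j$ is as close to $t$ as an integer division allows. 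Because $\diam\Gamma^n_j$ ranges over a factor-$2$ window and $t$ is a factor-$8$ below the top, one checks $k_j$ can be chosen so that $\tfrac1{k_j}\diam\Gamma^n_j\in[t/2,\,2t]=[D/16,\,D/4]$: the point is that the set of achievable values $\{\diam\Gamma^n_j/k : k\ge 2\}$ has consecutive ratios at most $2$ in the relevant range (ratio $k/(k-1)\le 2$ for $k\ge 2$), so it meets any window of multiplicative width $4$. Then for all $j$ and all $\Gamma^{n+1}\subset\Gamma^n_j$, $\diam\Gamma^{n+1}\in[D/16,D/4]\subset[\tfrac1{16}\diam\Gamma^n_j,\tfrac14\diam\Gamma^n_j]$ (using $\diam\Gamma^n_j\ge D/2$ for the lower end and $\diam\Gamma^n_j\le D$ for the upper end — one should double-check the endpoints here, possibly sharpening $t$ or the window slightly), which is (\ref{item:G3}); and since all level-$(n+1)$ diameters lie in $[D/16,D/4]$, a window of multiplicative width $4$, any two of them differ by a factor at most $4$ — which is more than the factor $2$ demanded by (\ref{item:G2}), so the window must be tightened.

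This last point is the main obstacle: naively the construction only forces the level-$(n+1)$ diameters into a factor-$4$ window, whereas (\ref{item:G2}) demands factor-$2$. To fix this I would not aim all arcs at a single target $t$, but rather exploit that after one refinement the diameters are already more tightly clustered than the input, and choose $t$ and the per-arc rounding rule so that the output window has multiplicative width at most $2$. The cleanest way is: given that the input window is $[D/2,D]$ (width $2$), pick for each arc the unique $k_j$ with $\tfrac1{k_j}\diam\Gamma^n_j\in[D/8,\,D/4)$ — one must verify this half-open width-$2$ window is always hit, again from $k/(k-1)\le 2$ — so that all level-$(n+1)$ diameters lie in $[D/8,D/4)$, giving (\ref{item:G2}) at level $n+1$ with ratio $<2$, while $[D/8,D/4)\subset[\tfrac1{16}\diam\Gamma^n_j,\tfrac14\diam\Gamma^n_j]$ for every $j$ (since $\diam\Gamma^n_j\in[D/2,D]$), giving (\ref{item:G3}). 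Propagating this, (\ref{item:G2}) holds at every level by induction. The only care needed is at the boundary of the window — whether to use open or closed endpoints and whether $D/8$ or $D/16$ is the right constant — and checking that $k_j\ge 2$ always, which holds because $\diam\Gamma^n_j/2 < D \le 2\cdot(D/4)\cdot 2$ forces $k_j$ to be at least $2$; these are routine once the window is pinned down.
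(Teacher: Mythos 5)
There is a genuine gap, and it sits at the very center of your argument: you assume that when an arc $\Gamma^n_j$ is divided into $k_j$ subarcs of equal diameter, each subarc has diameter exactly $\tfrac{1}{k_j}\diam\Gamma^n_j$. For a general metric arc (even a $1$-bounded turning one) this is false; the diameters of the pieces of an equal-diameter division are not additive along the arc. For instance, for a Koch-type curve equipped with the diameter distance, the natural division into $4$ subarcs of equal diameter produces pieces of diameter $\tfrac13\diam A$, not $\tfrac14\diam A$. All that the triangle inequality gives is the one-sided bound (common diameter) $\geq \diam A/k$. Your whole calibration — ``the achievable values $\{\diam\Gamma^n_j/k : k\ge 2\}$ have consecutive ratios at most $2$, hence hit any multiplicative window of width $2$'' — rests on this false formula, so you have no guarantee that any choice of $k_j$ lands the common diameter of the pieces in the prescribed window. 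This is precisely the point the paper's Lemma \ref{lem:divide_arcs} is designed to handle: one fixes a single threshold $\delta$ (the paper takes $\delta=\tfrac14\min_j\diam\Gamma^n_j$), takes the \emph{smallest} number of equal-diameter pieces whose common diameter is $\leq\delta$, and proves by a covering argument that this common diameter is then $\geq\delta/2$. That lemma, not the formula $\diam A/k$, is what makes the factor-$2$ window achievable, and it is the missing ingredient in your proposal.

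A secondary, fixable miscalibration: you aim all arcs at a window defined by the \emph{maximum} diameter $D$, namely $[D/8,D/4)$, and claim $[D/8,D/4)\subset[\tfrac1{16}\diam\Gamma^n_j,\tfrac14\diam\Gamma^n_j]$ for every $j$. This fails as soon as $\diam\Gamma^n_j<D$: e.g.\ a parent with $\diam\Gamma^n_j=D/2$ split into $3$ pieces would (even granting your formula) have pieces of diameter $D/6>\tfrac14\cdot\tfrac{D}{2}$, violating property (\ref{item:G3}) already for the nested pair. The window must be pinned to the small end, e.g.\ $[D/16,D/8]$, or — as the paper does — $[\delta/2,\delta]$ with $\delta=\tfrac14\min_j\diam\Gamma^n_j$; then the upper bound of (\ref{item:G3}) follows because $\delta\leq\tfrac14\diam\Gamma^n$ for every $\Gamma^n$, the lower bound because $\delta/2\geq\tfrac1{16}\max_j\diam\Gamma^n_j$ using (\ref{item:G2}) at level $n$, and (\ref{item:G2}) at level $n+1$ holds because all new arcs lie in the single factor-$2$ window. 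You flagged the endpoint constants as needing adjustment, so this part is repairable; but the reliance on the exact value $\diam A/k$ for equal-diameter divisions is a real gap that requires an argument of the type of Lemma \ref{lem:divide_arcs}.
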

The last property implies that each arc $\Gamma^n\in \G^n$ is subdivided
into at least four arcs $\Gamma^{n+1} \in \G^{n+1}$.

Before we construct these divisions of $\Gamma$, i.e., prove the
previous lemma, we need some preparation.

\begin{lemma}
  \label{lem:divide_arcs}
  Let $A$ be a $1$-bounded turning arc, and let $0<\delta\leq \diam A$. For each
  $n$ we divide $A$ into $n$ arcs $A_1, \dots, A_n$ of equal diameter
  (see Lemma \ref{lem:dividing_arcs}). Let $n$ be the smallest integer
  such that $\diam A_1 =\diam 
  A_2 = \dots = \diam A_n \leq \delta$. Then $\diam A_j \geq \delta/2$
  for all $j=1,\dots, n$.  
\end{lemma}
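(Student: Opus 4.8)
The plan is to exploit the minimality of $n$ together with a monotonicity property of the function $n \mapsto \diam A_j^{(n)}$, where $A_1^{(n)}, \dots, A_n^{(n)}$ denotes the division of $A$ into $n$ arcs of equal diameter. First I would dispose of the trivial case: if $n=1$, then $\diam A_1 = \diam A \ge \delta$ by hypothesis (indeed $\delta \le \diam A$), so certainly $\diam A_1 \ge \delta/2$. So assume $n \ge 2$. By minimality of $n$, the division of $A$ into $n-1$ arcs $A_1', \dots, A_{n-1}'$ of equal diameter satisfies $\diam A_i' > \delta$ for every $i$ (if $n-1 \ge 1$; note when $n=2$ this says $\diam A = \diam A_1' > \delta$).

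The key step is to relate $\diam A_j^{(n)}$ to $\diam A_i^{(n-1)}$. Since $A$ is $1$-bounded turning, by Lemma \ref{lem:prop_dia}(\ref{item:dd_biL}) and (\ref{item:dd_diam}) we may work with the diameter distance, under which $A$ is isometric to an interval $[0,L]$ with $L = \diam A$ and $\diam[s,t] = t-s$; equivalently, diameters of subarcs simply add along $A$. In this model the division into $k$ equal-diameter pieces is literally the division of $[0,L]$ into $k$ intervals of length $L/k$, so $\diam A_j^{(k)} = L/k$ for all $j$. Hence $\diam A_j^{(n-1)} = L/(n-1) > \delta$ gives $L > (n-1)\delta$, and therefore
\begin{equation*}
  \diam A_j^{(n)} = \frac{L}{n} > \frac{(n-1)\delta}{n} = \left(1 - \frac1n\right)\delta \ge \frac{\delta}{2},
\end{equation*}
where the last inequality uses $n \ge 2$. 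This is exactly the claim.

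The main thing to get right is not an obstacle so much as a bookkeeping point: one must invoke Lemma \ref{lem:prop_dia} to reduce to the additive (interval) model, so that "division into $k$ arcs of equal diameter" becomes the transparent statement that each piece has diameter $L/k$; without that reduction the equal-diameter division of a general metric arc need not behave additively, and the clean formula $\diam A_j^{(k)} = L/k$ would fail. Once that reduction is in place the estimate is immediate. I would also remark that the hypothesis $\delta \le \diam A$ is used only to make $n$ well-defined and to cover the case $n=1$; for $n \ge 2$ the argument above is self-contained.
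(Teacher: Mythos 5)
There is a genuine gap: your reduction to the ``additive interval model'' is false. The hypothesis that $A$ is $1$-bounded turning gives $\diam A[x,y]=\abs{x-y}$ for all $x,y\in A$, i.e.\ the diameter distance coincides with the original metric (Lemma \ref{lem:prop_dia}), but it does \emph{not} make $(A,\dia)$ isometric to a Euclidean interval $[0,L]$, because diameters of subarcs need not add along $A$. A concrete counterexample is the upper unit semicircle in $\R^2$ with the chordal metric: it is $1$-bounded turning (the chord length is monotone in the angular separation, so the diameter of any subarc is the distance between its endpoints), its diameter is $L=2$, yet dividing it into $k=2$ subarcs of equal diameter gives pieces of diameter $\sqrt 2$, not $L/2=1$. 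Thus your key formula $\diam A_j^{(k)}=L/k$ fails, and with it the inequality chain $\diam A_j^{(n)}=L/n>(n-1)\delta/n\geq\delta/2$; what survives is only subadditivity ($\diam$ of a union of consecutive arcs is at most the sum of their diameters), which gives an upper bound on $\diam A_j^{(n)}$ in terms of $L$, not the lower bound you need. Note also that the conclusion $\diam A_j\geq\delta/2$ is genuinely weaker than the $(1-\tfrac1n)\delta$ bound your model would give, which is a hint that the intended argument cannot be purely additive.

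The paper closes exactly this gap with a counting argument that uses only subadditivity and the minimality of $n$: suppose all $n$ pieces had common diameter $<\delta/2$; since the $(n-1)$-division (which exists by Lemma \ref{lem:dividing_arcs} and by minimality has pieces of diameter $>\delta$) cannot have any of its pieces covered by just two consecutive small pieces (two adjacent arcs of diameter $<\delta/2$ have union of diameter $<\delta$), one shows inductively that covering the first $k$ large pieces requires at least $2k+1$ small ones; taking $k=n-1$ yields $2(n-1)+1\leq n$, a contradiction for $n\geq 2$. If you want to keep the spirit of your comparison between the $n$- and $(n-1)$-divisions, you should replace the additivity step by such a covering estimate.
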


\begin{proof}
  Let $n$ be as in the statement. If $n=1$, then $\delta = \diam A$,
  and there is nothing to prove.

  \smallskip
  Assume now that $n\geq 2$. Assume that the statement is false. Then the
  subarcs of equal diameter $A_1,\dots, A_n$ have common diameter
  $\diam A_j < \delta/2$.

  \smallskip
  {\it Claim.} Suppose $A$ is subdivided into $k$ subarcs $A'_1,\dots, A'_k$ of equal
  diameter greater than $\delta$. Then $2k+1 \leq n$. 
  \\
  Assuming the $A_i$ and the $A'_j$ are ordered in the same order
  along $A$, we see that one needs at least $A_1,A_2,A_3$ to cover
  $A'_1$. Similarly, at least the first five arcs $A_1,\dots, A_5$ are
  needed to cover $A'_1\cup A'_2$. Inducting over the arcs $A'_1,
  \dots, A'_k$ proves the claim.

  \smallskip
  We obtain a contradiction when we set $k=n-1$. 
\end{proof}

\begin{proof}
  [Proof of Lemma \ref{lem:divideG}]
  We start by dividing $\Gamma$ into arcs $\Gamma^1_1, \dots ,
  \Gamma^1_{N^1}$ of equal diameter, such that  $\diam \Gamma/8  \leq \diam
  \Gamma^1_j \leq \diam \Gamma/4$ for all $j=1,\dots, N^1$ using Lemma
  \ref{lem:dividing_arcs} and Lemma \ref{lem:divide_arcs}, for some
  $N^1\in \N$. Here
  $a_0$ is the common endpoint of $\Gamma^1_1$ and $\Gamma^1_{N^1}$.  
  
  \smallskip
  Assume $\Gamma$ has been divided into arcs $\Gamma^n_1, \dots,
  \Gamma^n_{N^n}$ satisfying Lemma \ref{lem:divideG}, in particular $1/2\leq
  \diam \Gamma^n_i/\diam \Gamma^n_j \leq 2$ for all $i,j\in\{1,\dots,
  N^n\}$. 
  Set $\delta= \frac{1}{4}\min_j \diam \Gamma^n_j$. Using Lemma \ref{lem:dividing_arcs} and
  Lemma \ref{lem:divide_arcs}
  we divide each arc $\Gamma^n=\Gamma^n_i$ into arcs
  $\Gamma^{n+1}_1, \dots, \Gamma^{n+1}_{N}$ (here $\Gamma^{n+1}_j =
  \Gamma^{n+1}_{i,j}$ and $N=N^n_i$) of equal diameter, such that
  \begin{equation*}
    \delta/2 \leq \diam \Gamma^{n+1}_1 = \dots = \diam
    \Gamma^{n+1}_{N} \leq \delta. 
  \end{equation*}
  
  Let $\Gamma^{n+1}_1,\dots, \Gamma^{n+1}_{N^{n+1}}$ be the set of all
  these arcs, labeled along $\Gamma$, such that $a_0$ is the common
  point of $\Gamma^{n+1}_1, \Gamma^{n+1}_{N^{n+1}}$. It is clear that
  these arcs satisfy the properties of Lemma \ref{lem:divideG}. 

  Thus the arcs $\Gamma^n_1, \dots, \Gamma^n_{N^n}$ have been
  constructed for all $n$.  
\end{proof}

\section{Dividing the unit circle}
\label{sec:dividing-unit-circle}

For each $n\in \N$ we divide the unit circle $\Se=[0,1]/\{0 \Sim 1\}$
into intervals 
$I^n_1, \dots, I^n_{N^n}$, labeled consecutively on $\Se$. The common
endpoint of $I^n_1$ and $I^n_{N^n}$ is $0$. The set of these intervals is
denoted by $\I^n$.   

\begin{lemma}
  \label{lem:propIn}
  There are divisions $\I^n$ of the unit circle $\Se$ as above
  satisfying the following. 
  \begin{enumerate}
  \item 
    \label{item:propIn1}
    $\I^{n+1}$ is a \emph{subdivision} of $\I^n$. This means that every
    $I^{n+1}\in \I^{n+1}$ is contained in a (unique) interval $I^n\in
    \I^n$.
    \setcounter{mylistnum}{\value{enumi}}
  \end{enumerate}
  Two adjacent intervals $I,I'\in \I^n$ are called \emph{neighbors}
  (i.e., $I= I^n_j, I'= I^n_{j+1}$). Note that neighbors are always
  elements of the \emph{same} subdivision $\I^n$. 
  \begin{enumerate}
    \setcounter{enumi}{\value{mylistnum}}
  \item 
    \label{item:propIn2}
    The diameter of neighboring
    intervals are \emph{comparable}, more precisely they agree or
    differ by the factor $2$,
    \begin{equation*}
      {\abs{I}}/{\abs{I'}} \in \{1/2, 1, 2\},
    \end{equation*}
    for all neighbors $I,I'$.
  \item 
    \label{item:propIn4}
    If $I^{n+1}_i \subset I^n_j$ then $\abs{I^{n+1}_i} \leq
    \abs{I^n_j}/4$, for all $i=1,\dots, N^{n+1}$, $j=1,\dots, N^n$. 
  \item 
    \label{item:propIn3}
    The subdivisions $\I^n$ have the \emph{same combinatorics} as the 
    subdivisions $\G^n$. Namely
    \begin{equation*}
      I^{n+1}_i \subset I^n_j \quad \Leftrightarrow \quad \Gamma^{n+1}_i \subset
      \Gamma^n_j,
    \end{equation*}
    for all $i= 1,\dots N^{n+1}$, $j=1,\dots N^n$. 
  \end{enumerate}
\end{lemma}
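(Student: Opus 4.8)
The plan is to construct the intervals $I^n_j$ by induction on $n$, reading off all combinatorial data from the already-constructed subdivisions $\G^n$ of $\Gamma$. The key idea is that the \emph{combinatorics} of the subdivision of $\Gamma$ — which arcs $\Gamma^{n+1}_i$ lie in which $\Gamma^n_j$, and in what cyclic order — is a purely finite gadget, a rooted sequence of trees, and we will realize this same gadget on $\Se$ by choosing interval lengths that are always of the form $2^{-k}$ for integer exponents $k$. Property \eqref{item:propIn3} is then automatic by construction, so the real content is to arrange \eqref{item:propIn1}, \eqref{item:propIn2}, and \eqref{item:propIn4} simultaneously.

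For the base case, I divide $\Se$ into $N^1$ intervals $I^1_1,\dots,I^1_{N^1}$, one for each $\Gamma^1_j$, in the same cyclic order, with $0$ as the common endpoint of $I^1_1$ and $I^1_{N^1}$. I want the lengths to be dyadic and to satisfy the neighbor condition \eqref{item:propIn2}; since $\Se$ has total length $1$ this is a matter of writing $1$ as a sum of $N^1$ dyadic numbers whose consecutive ratios lie in $\{1/2,1,2\}$, which is easy (e.g. take them all equal to some $2^{-k}$ if $N^1$ is a power of two, or otherwise adjust a bounded number of them — a short lemma). For the inductive step, suppose $I^n_j$ has been built with $\abs{I^n_j}=2^{-k_j}$ for some integer $k_j$, and suppose $\Gamma^n_j$ is subdivided into $m_j\ge 4$ arcs $\Gamma^{n+1}$ (recall the remark after Lemma \ref{lem:divideG} that each arc is subdivided into at least four). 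I then subdivide $I^n_j$ into $m_j$ subintervals, each of dyadic length $\le 2^{-k_j}/4 = \abs{I^n_j}/4$, arranged so consecutive ratios inside $I^n_j$ lie in $\{1/2,1,2\}$; the division-into-dyadic-pieces lemma from the base case applies verbatim to the interval $I^n_j$ after rescaling by $2^{k_j}$. This gives \eqref{item:propIn1} and \eqref{item:propIn4} at once.

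The delicate point is \eqref{item:propIn2} \emph{across} the boundary between two parent intervals $I^n_j$ and $I^n_{j+1}$: the last child of $I^n_j$ and the first child of $I^n_{j+1}$ are neighbors in $\I^{n+1}$, and their lengths must differ by at most a factor of two. Since $\abs{I^n_j}$ and $\abs{I^n_{j+1}}$ themselves differ by at most a factor of two by the inductive hypothesis, and since I have freedom in how small to make the extreme children of each parent (any dyadic value $\le \abs{I^n_j}/4$ that fits, with bounded-ratio neighbors), I can prescribe in advance that the first and last child of every $I^n_j$ have length exactly $\abs{I^n_j}/4$ (or, if $m_j$ forces otherwise, a value within one dyadic step of that). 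Then the boundary ratio is $\abs{I^n_j}/\abs{I^n_{j+1}}\in\{1/2,1,2\}$, as needed. I would phrase this cleanly by first proving an auxiliary lemma: \emph{given a dyadic-length interval $J$ and an integer $m\ge 4$, one can partition $J$ into $m$ consecutive subintervals of dyadic length, each $\le\abs{J}/4$, with the first and last of length exactly $\abs{J}/4$ and all consecutive ratios in $\{1/2,1,2\}$}; this is an elementary counting statement about writing $m$ as a sum with the prescribed shape, and it is the one place where a small case check (on $m \bmod$ small powers of two) is unavoidable.

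The main obstacle, then, is not any single estimate but bookkeeping: keeping the dyadic-exponent invariant, the within-parent neighbor condition, and the across-parent neighbor condition consistent through the induction, which the auxiliary lemma is designed to package. Once that lemma is in hand, the four properties \eqref{item:propIn1}--\eqref{item:propIn3} all follow formally from the construction, with \eqref{item:propIn3} holding by fiat since every choice was indexed by the combinatorics of $\G^n$. I would close by noting that, exactly as with Lemma \ref{lem:divideG}, the construction respects the marked point: $0\in\Se$ plays the role of $a_0\in\Gamma$ throughout, being the common endpoint of the first and last interval at every level.
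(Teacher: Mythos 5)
Your proposal is correct and takes essentially the same route as the paper: the auxiliary partition lemma you postulate (dyadic pieces, each at most $\abs{J}/4$, first and last exactly $\abs{J}/4$, consecutive ratios in $\{1/2,1,2\}$) is precisely what the paper supplies by its explicit even/odd construction with lengths $1/4,1/8,\dots$ decreasing toward the midpoint, and the cross-parent neighbor condition is handled exactly as you describe. One small caveat: drop the hedge ``a value within one dyadic step of that'' --- since every arc is subdivided into at least four pieces the extreme children can always be made exactly a quarter of the parent, and if the hedge were ever actually used the ratio across a parent boundary could fall outside $\{1/2,1,2\}$.
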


\begin{proof}
Let $I=I^n_i$ be given. Assume the corresponding arc
$\Gamma^n=\Gamma^n_i$ is divided into $N=N^n_i$ arcs
$\Gamma^{n+1}_j$. Note that by construction $N^n_i\geq 4$. 

Let $c$ be the \emph{midpoint} of the interval $I$ (i.e., $c=
\frac{1}{2} (a+b)$ if $I=[a,b]$). It divides $I$ into the \emph{left}
and \emph{right half} of $I$.

\smallskip
To simplify the discussion we assume that $\abs{I}=1$. For the general
case, if we write in the following ``length of a subinterval is
$1/4$'', it has to be replaced by ``length of a subinterval is
$1/4\cdot \abs{I}$'' and so on. 

\smallskip
{\it Case 1.} $N$ is even.
\\
Starting from the left endpoint of $I$, we divide the left half of $I$
into intervals of length $1/4, 
1/8, \dots, 2^{-N/2}$ (times the length of $I$). There is one remaining
interval of length 
$2^{-N/2}$, which is the last interval of the left half of $I$. The right half of
the interval is divided in a symmetric fashion, meaning starting from
the right endpoint, we divide the right half into intervals of length
$1/4,1/8, \dots, 2^{-N/2+1}, 2^{-N/2}, 2^{-N/2}$. See the bottom of
Figure \ref{fig:1}. 

\begin{figure}
  \centering
  \begin{overpic}
    [width=11cm, 
    tics=20]
    {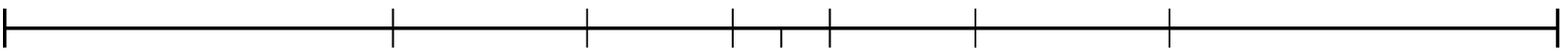}
    \put(12,-2){$\scriptstyle{{1}/{4}}$}
    \put(31,-2){$\scriptstyle{{1}/{8}}$}
    \put(68,-2){$\scriptstyle{{1}/{8}}$}
    \put(86,-2){$\scriptstyle{{1}/{4}}$}
    \put(41,-1){$\scriptstyle{\dots}$}
    \put(56.5,-1){$\scriptstyle{\dots}$}
    \put(47,-2.7){$\scriptstyle{{2^{-N/2}}}$}
    \put(49,4){$\scriptstyle{{2^{-m+1}}}$}
  \end{overpic}
  \caption{Subdividing an interval.}
  \label{fig:1}
\end{figure}

\smallskip
{\it Case 2.} $N=2m-1$ is odd. 
\\
We divide $I$ into $N+1=2m$ subintervals as in
Case 1. We then take the union of the two middle subintervals, i.e.,
the two subintervals containing the midpoint $c$. Thus $I$ is divided
into $N$ subintervals of lengths
\begin{equation*}
  1/4, 1/8, \dots, 2^{-m+1}, 2^{-m},\,
  2^{-m+1} \,, 2^{-m}, 2^{-m+1}, \dots, 1/8, 1/4.  
\end{equation*}
See the top of Figure \ref{fig:1}.


\smallskip
This finishes the division of $I$, thus of all $I^n_i$, into
intervals. Thus all $I^n_j$ have been constructed for all $n\in
\N$. It is clear that they satisfy the properties of Lemma
\ref{lem:propIn}. 

\smallskip
In Case 1 there are two subintervals of $I$ containing the midpoint of
$I$; 
in Case 2 there is a single subinterval of $I$. Such a subinterval is
called a \emph{middle} subinterval of $I$.  
\end{proof}
\section{The weak quasisymmetry}
\label{sec:weak-quasisymmetry}

Let $s^n_0, \dots, s^n_{N^n-1}$ be the endpoints of the intervals
$I^n_j$ ordered increasingly on $\Se=[0,1]/\{0\sim 1\}$,
$s^n_0=0$ for all $n\in \N$. Let 
$a^n_0,\dots, a^n_{N^n-1}$ be the endpoints of the arcs
$\Gamma^n_j$. Then we define $\varphi(s^n_j) = a^n_j$. From Lemma
\ref{lem:divideG} (\ref{item:G1}) and Lemma \ref{lem:propIn}
(\ref{item:propIn3}) it follows that $\varphi$ is well defined, i.e.,
if $s^n_i=s^m_j$ then $\varphi(s^n_i)= a^n_i = a^m_j =
\varphi(s^m_j)$. 

We show uniform continuity of $\varphi$ on the set $\mathbf{s}=\{s^n_j 
\mid n\in \N, j=0,\dots, N^n-1\}$. Let $\delta_n:=
\min_j \abs{I^n_j}$. Then if $\lambda(s, t) \leq \delta_n/2$, for two
points $s,t\in \mathbf{s}$ (recall from (\ref{eq:def_lambda}) that
$\lambda$ is the metric on $\Se$)
then $s, t$ are contained in adjacent intervals $I^n_{j},
I^n_{j+1}$. Thus $\varphi(s),\varphi(t)$ are contained in adjacent
arcs $\Gamma^n_j, \Gamma^n_{j+1}$. Thus 
\begin{equation*}
  \abs{\varphi(s)- \varphi(t)} \leq \diam \Gamma^n_j + \diam
  \Gamma^n_{j+1} \leq 2\cdot 4^{-n} \diam \Gamma,
\end{equation*}
by Lemma \ref{lem:divideG} (\ref{item:G3}), showing uniform continuity
of $\varphi$ on $\mathbf{s}$. Since this set is dense in $\Se$,
$\varphi$ extends continuously to $\Se$. The surjectivity is clear,
since the set $\{a^n_j \mid n\in \N, j=0,\dots, N^n-1\}$ is dense in
$\Gamma$. Injectivity follows from the fact that disjoint sets $I^n_i,
I^n_j$ are mapped to disjoint arcs $\Gamma^n_i, \Gamma^n_j$. Thus
$\varphi\colon \Se \to \Gamma$ is a homeomorphism.

\section{Estimating intervals}
\label{sec:estimating-intervals}

Given an interval $[x,y]\subset \Se$ we define
\begin{equation}
  \label{eq:def_delta}
  \delta([x,y]):= \max\{\abs{I^n_j} \mid I^n_j \subset [x,y]\}.
\end{equation}
Here the maximum is taken over $n\in \N$ and all intervals $I^n_j\in
\I^n$ as defined in 
Section \ref{sec:dividing-unit-circle}.

\begin{lemma}
  \label{lem:sizeI_delta}
  Let $[x,y]\subset \Se$ be any interval. Then
  \begin{equation*}
    \delta([x,y])\leq \abs{[x,y]} \leq 12\, \delta([x,y]). 
  \end{equation*}
  Furthermore, if the maximum in equation (\ref{eq:def_delta}) is
  attained for an interval $I=I^n_j \in \I^n$, then there are two
  intersecting (possibly identical)
  intervals $\hat{I}, \hat{J}\in \I^{n-1}$ such that 
  \begin{equation*}
    I\subset [x,y]\subset \hat{I} \cup \hat{J}. 
  \end{equation*}
\end{lemma}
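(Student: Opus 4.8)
The plan is to analyze the interval $[x,y]$ via the subdivision structure $\I^n$ of $\Se$. First I would observe that the lower bound $\delta([x,y]) \leq \abs{[x,y]}$ is immediate: any $I^n_j \subset [x,y]$ has $\abs{I^n_j} \leq \abs{[x,y]}$ by monotonicity of diameter. For the upper bound and the ``furthermore'' part, let $n$ be the \emph{largest} level such that some interval of $\I^n$ is contained in $[x,y]$; equivalently, $n$ is one more than the largest level at which $[x,y]$ is contained in a single interval, so that $[x,y]$ meets at least two intervals of $\I^{n-1}$ but is not contained in any single interval of $\I^n$. (One must first check this $n$ is well-defined and that the maximum in \eqref{eq:def_delta} is attained: intervals at level $n$ have definite size $\abs{I^n_j} \geq \delta_n > 0$, shrink to $0$ as $n \to \infty$ by Lemma~\ref{lem:propIn}~(\ref{item:propIn4}), and at level $0$ or $1$ the circle is covered by few intervals, so only finitely many levels have an interval fitting inside $[x,y]$.)

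Next I would locate $[x,y]$ inside the coarser subdivision $\I^{n-1}$. Since some $I = I^n_j \subset [x,y]$, and $I$ lies in a unique parent $\hat I \in \I^{n-1}$, while $[x,y]$ is \emph{not} contained in any single interval of $\I^n$ — in particular not in $\hat I$ viewed as a union of its $\I^n$-children — the interval $[x,y]$ must spill past $\hat I$ on at least one side. The claim is that $[x,y]$ is covered by $\hat I$ together with one of its two $\I^{n-1}$-neighbors, say $\hat J$; equivalently $[x,y]$ cannot reach all the way across a neighbor of $\hat I$ and out the far side. If it did, then $[x,y]$ would contain an entire $\I^{n-1}$-neighbor of $\hat I$, hence contain an interval of $\I^{n-1}$, contradicting maximality of $n$. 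So $I \subset [x,y] \subset \hat I \cup \hat J$ with $\hat I \cap \hat J \neq \emptyset$ (they are neighbors or equal), which is the second assertion.

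For the quantitative bound $\abs{[x,y]} \leq 12\,\delta([x,y])$, I would estimate $\abs{\hat I \cup \hat J} \leq \abs{\hat I} + \abs{\hat J}$ and bound each parent diameter in terms of $\abs{I} = \delta([x,y])$. Since $\hat J$ is a neighbor of $\hat I$, Lemma~\ref{lem:propIn}~(\ref{item:propIn2}) gives $\abs{\hat J} \leq 2\abs{\hat I}$, so $\abs{[x,y]} \leq 3\abs{\hat I}$. It remains to show $\abs{\hat I} \leq 4\,\delta([x,y]) = 4\abs{I}$. This is where the internal structure of the subdivision from the proof of Lemma~\ref{lem:propIn} enters, rather than just its stated properties: the children of $\hat I$ in $\I^n$ have lengths $\tfrac14\abs{\hat I}, \tfrac18\abs{\hat I}, \dots$, so the \emph{largest} child has length exactly $\tfrac14\abs{\hat I}$. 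If $I$ is that largest child we get $\abs{\hat I} = 4\abs{I}$; but $I$ need only be \emph{some} child contained in $[x,y]$, so a priori $\abs{I}$ could be much smaller than $\tfrac14\abs{\hat I}$. The main obstacle is therefore to argue that $[x,y]$ actually contains a \emph{large} child of $\hat I$ (or of $\hat J$) — intuitively, because $[x,y]$ straddles the common endpoint of $\hat I$ and $\hat J$ and, being too big to fit in any level-$n$ interval, it must swallow one of the larger children adjacent to that endpoint or to $x$ or $y$; a short case analysis on which side(s) $[x,y]$ protrudes, using that the children near any endpoint of $\hat I$ of a given small size are flanked by children of comparable-or-larger size, upgrades the bound to $\abs{I} \geq \tfrac1{12}\abs{[x,y]}$ after combining with the factor $3$ above. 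Wrapping the constants together yields $\abs{[x,y]} \leq 12\,\delta([x,y])$.
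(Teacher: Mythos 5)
Your covering step is close to the paper's, but its scaffolding has problems. First, the level $n$ you work with (the coarsest level at which some subdivision interval fits inside $[x,y]$; you call it the ``largest'' level, which as stated does not exist since the mesh of $\I^n$ tends to $0$) is not obviously the level of an interval attaining the maximum in (\ref{eq:def_delta}), which is what the lemma is about; these do coincide, but that needs an argument you don't give. Your claimed equivalence with ``one more than the largest level at which $[x,y]$ is contained in a single interval'' is false when $[x,y]$ straddles two intervals of a level without containing either. Also, ``$[x,y]$ is not contained in any single interval of $\I^n$ --- in particular not in $\hat I$'' is a non sequitur, since $\hat I\in\I^{n-1}$ is a union of many $\I^n$-children; in fact the typical situation is $[x,y]\subset\hat I$ with $\hat J=\hat I$ (the lemma explicitly allows $\hat I=\hat J$), and you also do not address the possibility that $[x,y]$ protrudes on both sides of $\hat I$. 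All of this is repairable by arguing, as the paper does, directly from the maximality of $\abs{I}$: any interval of $\I^{n-1}$ has length at least $2\abs{I}$, hence cannot be contained in $[x,y]$, which both pins $[x,y]$ inside $\hat I\cup\hat J$ and rules out the two-sided overflow.

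The genuine gap is the quantitative bound $\abs{[x,y]}\leq 12\,\delta([x,y])$. Your proposed intermediate step, $\abs{\hat I}\leq 4\,\delta([x,y])$ --- equivalently, that $[x,y]$ contains a child of $\hat I$ (or $\hat J$) of the maximal length $\abs{\hat I}/4$ --- is false in general: if $[x,y]$ lies around the midpoint of $\hat I$, the maximal subdivision interval $I\subset[x,y]$ can be a middle child of length $2^{-N/2}\abs{\hat I}$, far smaller than $\abs{\hat I}/4$, and $\abs{\hat I}$ is then not bounded by any fixed multiple of $\delta([x,y])$. The correct mechanism, which is the heart of the paper's proof, does not go through $\abs{\hat I}$ at all in these configurations: since no subdivision interval longer than $\abs{I}$ is contained in $[x,y]$, and since the child lengths of $\hat I$ follow the explicit profile $1/4,1/8,\dots,2^{-N/2},\dots,1/8,1/4$, the interval $[x,y]$ is trapped between the nearest children longer than $\abs{I}$ on either side, and summing the few intervening children gives $\abs{[x,y]}\leq 8\abs{I}$ in the middle cases, while the boundary case $\abs{I}=\abs{\hat I}/4$ gives $\abs{[x,y]}\leq\abs{\hat I}+\abs{\hat J}\leq 12\abs{I}$. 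This case analysis (the paper's Cases 1--4) is precisely what you defer to ``a short case analysis,'' so the essential content of the lemma is missing from the proposal, and the route you sketch for it points in the wrong direction.
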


\begin{proof}
  Let $I= I^n_j\subset [x,y]$ be one interval where the maximum from
  (\ref{eq:def_delta}) is attained, i.e., $\abs{I} =
  \delta([x,y])$. The left inequality, i.e., $\abs{I}
  =\delta([x,y])\leq \abs{[x,y]}$ is obvious. 
  Let $\hat{I}\supset I$ be the parent of $I$, i.e.,
  the unique interval $\hat{I}\in \I^{n-1}$ containing $I$. Assume
  that $\hat{I}$ was subdivided into $N$ intervals $I^n\in \I^n$.
  We consider several cases.  
  
  \smallskip
  {\it Case 1.} $\abs{I}=\abs{\hat{I}}/4$. 
  \\ 
  This can happen in three instances: either $I$ is the left- or rightmost
  interval in $\hat{I}$ (i.e., $I,\hat{I}$ share a boundary point);
  or $N$ is equal to $4$ or $5$, and $I$ contains the midpoint of
  $\hat{I}$.  
  
  If $[x,y]\subset \hat{I}$ we are done, since then $\abs{[x,y]} \leq
  \abs{\hat{I}} = 4 \abs{I} = 4 \delta([x,y])$. We set
  $\hat{J}:=\hat{I}$. 
  
  So assume that $[x,y]\not\subset \hat{I}$. This means that one
  endpoint of $\hat{I}$, without loss of generality the left endpoint,
  is an interior point of $[x,y]$. From the maximality of $I$ it
  follows that $y\in \hat{I}$. 
  Consider the left neighbor
  $\hat{J}\in \I^{n-1}$ of $\hat{I}$. Note that $\abs{\hat{J}} \geq \frac{1}{2}
  \abs{\hat{I} } = 2 \abs{I}$. Thus $\hat{J}\not \subset [x,y]$ by the
  maximality of $I$.  
  Thus
  $[x,y]\subset \hat{J} \cup \hat{I}$. It holds $\abs{\hat{I}} = 4
  \abs{I}$ and $\abs{\hat{J}} \leq  2\abs{\hat{I}} = 8 \abs{I}$ so
  \begin{equation*}
    \abs{[x,y]} \leq 12 \,\delta([x,y]). 
  \end{equation*} 
  
  \medskip
  {\it Case 2.} $N\geq 6$ is even, and $I=I^n_j$ is a middle subinterval of
  $\hat{I}$ (i.e., contains the midpoint of $I$).   
  \\
  Then either both $I^n_{j-2}, I^n_{j+3}$ or both $I^n_{j-3},I^n_{j+2}$ have
  diameter strictly bigger 
  than $I$. We can assume without loss of generality the former
  case. This means that $I$ is in the left half of $\hat{I}$ and that
  \begin{equation*}
    I^n_{j-2} \cup I^n_{j-1} \cup I^n_j \cup I^n_{j+1} \cup I^n_{j+2}
    \cup I^n_{j+3}
  \end{equation*}
  cover $[x,y]$. Note, that the total length of these sets is $8
  \abs{I}$. Thus $\delta([x,y]) \leq \abs{[x,y]} \leq 8
  \delta([x,y])$. 

  \medskip
  {\it Case 3.} $N\geq 7$ is odd, and $I^n_j$ is the middle
  subinterval of $\hat{I}$. 
  \\
  Similar to the preceding case, $I^n_{j-3}, I^n_{j+3}$ have
  twice the length as $I$, thus they are not contained in $[x,y]$ and  
  \begin{equation*}
    [x,y] \subset I^n_{j-3} \cup \dots \cup I^n_{j+3}
  \end{equation*}
  Note that the total length of these intervals is $8 \abs{I}$. This
  finishes the claim in this case.

  \medskip
  {\it Case 4.} Remaining case.
  \\
  One of the neighbors of $I=I^n_j$, without loss of generality the
  left neighbor $I^n_{j-1}$,
  has twice the length as $I$. 

  Furthermore, there is a subinterval $I^n_{j+k}\in \I^n$ of
  $\hat{I}$, that has the same length as $I$. It is symmetric to $I$ with
  respect to the midpoint of $\hat{I}$. Then $I^n_{j-1},
  I^n_{j+k+1}$ have twice the length of $I$, thus are not contained in
  $[x,y]$. Thus
  \begin{equation*}
    [x,y] \subset I^n_{j-1} \cup I^n_j \cup \dots \cup I^n_{j+k} \cup
    I^n_{j+k+1}. 
  \end{equation*}
  The total length of the right-hand side is $8
  \abs{I}$, finishing the claim. 

  \medskip
  Note that in Case 2--Case 4, the subintervals that cover $[x,y]$ are
  all contained in the parent $\hat{I}$, we then set $\hat{J}:= \hat{I}$.  
\end{proof}



\section{Estimating order}
\label{sec:estimating-order}

Consider now two adjacent intervals (in $\Se$) of the same length,
i.e., $[x-t,x], [x,x+t]$ for some $x\in \Se$ and $0< t\leq
1/2$. Consider the largest subdivision intervals contained in
$[x-t,x], [x,x+t]$, meaning we consider intervals $J^m\in \I^m,
I^n\in \I^n$ such that
\begin{align*}
  &J^m\subset [x-t,x], &&I^n \subset [x,x+t] \quad\text{and}
  \\
  &\abs{J^m} = \delta([x-t,x]),  && \abs{I^n} = \delta([x,x+t]). 
\end{align*}

We want to show that $n,m$ differ by at most a constant $k_0$ (in fact
$k_0=4$). Before giving the detailed argument, let us quickly describe
the idea. 
From Lemma \ref{lem:sizeI_delta} it follows that $\abs{I^n},
\abs{J^m}$ are comparable.  
Without loss of generality, we can assume that $n\leq m$. 
Let $J^n\in \I^n$ be the (unique) $n$-th order subdivision-interval
containing 
$J^m$. If $m-n$ is large, then $\abs{J^n}$ is large compared to
$\abs{J^m}$, thus large compared to $\abs{I^n}$. Then $J^n, I^n$
have to be far apart. This is impossible. 

\begin{lemma}
  \label{lem:mn_est}
  In the setting as above it holds that $\abs{m-n} \leq 4$. 
\end{lemma}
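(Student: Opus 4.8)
The plan is to make the informal idea sketched just before the lemma rigorous. Set $s:=\abs{I^n}$ and $\sigma:=\abs{J^m}$. By Lemma \ref{lem:sizeI_delta} applied to the intervals $[x,x+t]$ and $[x-t,x]$ (both of length $t$), we have $s\le t\le 12s$ and $\sigma\le t\le 12\sigma$, hence $s$ and $\sigma$ are comparable: $\tfrac{1}{12}\le s/\sigma\le 12$. Assume, after possibly swapping the roles of the two intervals, that $n\le m$. Let $J^n\in\I^n$ be the unique $n$-th order interval containing $J^m$. I would compute how $\abs{J^n}$ grows with $m-n$: by Lemma \ref{lem:propIn}(\ref{item:propIn4}) each step of subdivision shrinks an interval by at least a factor $4$, so $\sigma=\abs{J^m}\le 4^{-(m-n)}\abs{J^n}$, i.e. $\abs{J^n}\ge 4^{\,m-n}\sigma$.

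Next I would bound $\abs{J^n}$ from above by a multiple of $s$, using that $J^n$ and $I^n$ are both $n$-th order intervals living inside a short stretch of $\Se$. Here is where I would use the second conclusion of Lemma \ref{lem:sizeI_delta}: it gives intervals $\hat I,\hat J\in\I^{n-1}$ with $I^n\subset[x,x+t]\subset\hat I\cup\hat J$ and $\hat I\cap\hat J\ne\emptyset$. Since $x-t\le x$ and $[x-t,x]$ has the same length $t$, and $J^n\supset J^m\subset[x-t,x]$, the interval $J^n$ cannot be too far from $x$; more precisely $J^n$ meets the interval $[x-t,x+t]$, and its left neighbor chain back toward $x$ consists of $n$-th order intervals each of which, by Lemma \ref{lem:sizeI_delta}, has diameter at most $t$ (otherwise it would be a larger subdivision interval inside one of $[x-t,x]$, $[x,x+t]$, contradicting maximality of $J^m$ or $I^n$). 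The cleanest route: observe that $\abs{J^n}$, being an $n$-th order interval adjacent (through at most a couple of neighbors, using Lemma \ref{lem:sizeI_delta}'s covering of $[x-t,x]$ by $O(1)$ many $n$-th order intervals) to $I^n$, satisfies $\abs{J^n}\le 2\cdot\abs{\hat I\text{ or }\hat J}\le 8\abs{I^n}=8s$ via Lemma \ref{lem:propIn}(\ref{item:propIn2}) and (\ref{item:propIn4}). Let me just argue that $[x-t,x]$ is covered by boundedly many $n$-th order intervals each of which is a neighbor-translate of something of size $\le t$, so their common parent interpretation forces $\abs{J^n}\le c\,s$ for an absolute constant $c$ (a small explicit constant like $12$ or $24$).

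Combining the two bounds gives $4^{\,m-n}\sigma\le\abs{J^n}\le c\,s\le 12c\,\sigma$, so $4^{\,m-n}\le 12c$, and therefore $m-n\le\log_4(12c)$. With $c$ on the order of $12$–$24$ this yields $m-n\le 4$, matching the claimed $k_0=4$; if my constant bookkeeping gives something slightly worse I would tighten the covering step (Cases 2–4 of Lemma \ref{lem:sizeI_delta} already give the sharp constant $8$ rather than $12$ in the situations that matter here, since the relevant $[x-t,x]$ is covered inside a single parent). Finally, undoing the swap, the same bound holds with $n,m$ exchanged, giving $\abs{m-n}\le 4$.

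The main obstacle I expect is the middle step: pinning down that $\abs{J^n}\le c\,\abs{I^n}$ with an honest absolute constant. The subtlety is that $J^n$ and $I^n$ need not be neighbors, nor even children of the same parent; what I can say is that the whole block $[x-t,x+t]$ (length $2t$, hence $\le 24\,\delta$) is covered by finitely many intervals of $\I^n$, but I must ensure none of those covering intervals is \emph{larger} than $c\,s$ — which is exactly guaranteed by the maximality defining $\delta([x-t,x])$ and $\delta([x,x+t])$ together with Lemma \ref{lem:propIn}(\ref{item:propIn2}), since any $n$-th order interval strictly inside one of the two halves has size $\le\sigma$ or $\le s$, and the (at most two) that straddle $x$ differ from their neighbors by at most a factor $2$. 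Carefully enumerating these finitely many intervals and invoking Lemma \ref{lem:sizeI_delta}'s covering count is the only place real care is needed; everything else is the geometric-series estimate above.
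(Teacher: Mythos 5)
Your overall skeleton coincides with the paper's: comparability of $\abs{I^n}$ and $\abs{J^m}$ from Lemma \ref{lem:sizeI_delta}, the factor $4^{-(m-n)}$ from Lemma \ref{lem:propIn} (\ref{item:propIn4}), and then a bound $\abs{J^n}\le c\,\abs{I^n}$ to close the loop. But that middle bound --- the only nontrivial point of the lemma --- is never actually established, and your primary mechanism for it is wrong. The intervals $\hat I,\hat J\in\I^{n-1}$ from Lemma \ref{lem:sizeI_delta} cover $[x,x+t]$, not $[x-t,x]$, so they say nothing about $J^n$, which lies over the other half; the inequality $\abs{\hat I}\le 8\abs{I^n}$ is also unavailable (the lemma only gives $\abs{[x,x+t]}\le 12\abs{I^n}$, and the parent of $I^n$ can be exponentially larger than $I^n$, e.g.\ when $I^n$ is a middle subinterval); and ``adjacent through at most a couple of neighbors'' fails, since between $J^n$ and $I^n$ there can be arbitrarily many tiles of $\I^n$ --- their total length is bounded, not their number. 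Your fallback sketch in the last paragraph is closer, but it omits exactly the dangerous configurations: a tile straddling $x-t$ (which may be $J^n$ itself) is controlled neither by the maximality defining $\delta$ nor directly by Lemma \ref{lem:propIn} (\ref{item:propIn2}), and a tile straddling $x$ is only controlled once you argue that its neighbor toward $I^n$ must be contained in $[x,x+t]$ --- which requires the separate observation that otherwise no tile of $\I^n$ could lie inside $[x,x+t]$, contradicting the existence of $I^n$. Neither point appears in your argument.

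The paper closes the step with one clean observation you are missing: if $I,I'\in\I^n$ satisfy $\abs{I'}/\abs{I}\ge 2^{i+1}$, then by Lemma \ref{lem:propIn} (\ref{item:propIn2}) the chain of $\I^n$-neighbors between them must contain a tile of size $2^i\abs{I}$, hence $\dist(I,I')\ge 2^i\abs{I}$. Applied to $I=I^n$ and $I'=J^n$, both of which meet $[x-t,x+t]$, whose length is at most $24\abs{I^n}$ by Lemma \ref{lem:sizeI_delta}, this rules out $\abs{J^n}/\abs{I^n}\ge 2^6$; hence $\abs{J^n}\le 32\abs{I^n}$, and then
\begin{equation*}
  \tfrac1{12}\abs{I^n}\le\abs{J^m}\le 4^{-(m-n)}\abs{J^n}\le 4^{-(m-n)}\,32\,\abs{I^n}
\end{equation*}
forces $m-n\le 4$. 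Either invoke this distance claim or carry out the straddling-tile case analysis explicitly; as written, your proposal does not prove $\abs{J^n}\le c\,\abs{I^n}$, so the final geometric-series estimate has nothing to rest on.
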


\begin{proof}
  As in the outline given above we assume that $n\leq m$, and let
  $J^n\in \I^n$ be the subdivision-interval containing $J^m$. If $m-n=
  k_0$, then 
  $\abs{J^m} \leq 4^{-k_0} \abs{J^n}$ by Lemma \ref{lem:propIn}
  (\ref{item:propIn4}).  

  \medskip
  {\it Claim.} Consider two intervals $I,I'\in \I^n$ such that
  $\abs{I'} / \abs{I} \geq 2^{i+1}$ for some $i\geq 1$. Then
  $\dist(I,I') \geq 2^i\abs{I}$. 

  \smallskip
  This is clear, since the interval between $I,I'$ has to contain one
  of size $2^i\abs{I}$ by Lemma \ref{lem:propIn}
  (\ref{item:propIn2}). 

  \medskip
  From Lemma \ref{lem:sizeI_delta} it follows that $\abs{[x-t,
    x+t]}\leq 24 \abs{I^n}$. Thus it follows from the previous claim 
  that $\abs{J^n} / \abs{I^n} \leq 2^5$. Indeed $\abs{J^n} / \abs{I^n}
  \geq 2^6$ implies by the claim that $\dist(J^n, I^n) \geq 2^5
  \abs{I^n} = 32 \abs{I^n}$, which is impossible. Thus by Lemma
  \ref{lem:sizeI_delta} 
  \begin{equation*}
    \frac{1}{12} \abs{I^n} \leq \frac{1}{12} \abs{[x,x+t]} =
    \frac{1}{12} \abs{[x-t,x]} \leq \abs{J^m} \leq 4^{-k_0}\abs{J^n}
    \leq 
    4^{-k_0} 2^5 \abs{I^n}.
  \end{equation*}
  We obtain a contradiction if we choose $k_0$ such that $4^{-k_0} 2^5
  < {1}/{12}$ or $k_0\geq 5$. This finishes the proof. 
\end{proof}

\section{Proof of the Theorem}
\label{sec:proof-theorem}

After these preparations, we are ready to prove the main theorem.

\begin{proof}[Proof of Theorem \ref{thm:main}] 
  Recall from Section \ref{sec:diameter-distance} that it is enough to
  prove the theorem in the case when
  $\Gamma$ is $1$-bounded turning. This means that for any two points
  $x,y\in \Gamma$, the arc of smaller diameter
  $\Gamma[x,y]\subset \Gamma$ between $x,y$ satisfies $\diam
  \Gamma[x,y] = \abs{x-y}$. 

  Thus it is enough to show that the arcs $\varphi([x-t,x])$,
  $\varphi([x,x+t])$ have comparable diameter for all $x\in \Se$,
  $0<t\leq 1/2$. Let $I_{-}\in \I^m,
  I_{+}\in \I^n$ be the largest intervals contained in $[x-t,x],
  [x,x+t]$, i.e., 
  \begin{align*}
    &I_{-} \subset [x-t,x], &&I_{+} \subset [x,x+t] \quad \text{and}
    \\
    &\abs{I_{-}} = \delta([x-t,x]), &&\abs{I_{+}} = \delta([x,x+t]).     
  \end{align*}
  Let $\hat{I}_{-}, \hat{J}_{-}\in \I^{m-1}$ be the intervals that
  cover $[x-t,x]$ according to Lemma \ref{lem:sizeI_delta}. Then
  \begin{align*}
    \abs{\varphi(x-t) - \varphi(x)} &= \diam \varphi([x-t, x]) \leq
    \diam \varphi(\hat{I}_{-} \cup \hat{J}_{-}) 
    \\
    &\leq 32 \diam \varphi(I_{-}) 
    \qquad\text{ by Lemma \ref{lem:divideG} (\ref{item:G3})}
    \\
    &\leq 32\cdot 2 \cdot 16^4 \diam \varphi(I_{+})  
    \intertext{by Lemma \ref{lem:mn_est}, Lemma \ref{lem:divideG}
      (\ref{item:G3}), and 
      Lemma \ref{lem:divideG} (\ref{item:G2}),}
    & \leq 32 \cdot 2\cdot 16^4 \diam \varphi([x,x+t])
    \\
    & = 32\cdot 2\cdot 16^4\,\abs{\varphi(x) - \varphi(x+t)}.
  \end{align*}
  This finishes the proof.
\end{proof}

\section{Concluding remarks}
\label{sec:concluding-remarks}

It is natural to ask, how small the involved
constants can be chosen. In particular, how small can the constant $H\geq 1$ of the
weak-quasisymmetric parametrization $\varphi\colon \Se\to \Gamma$ for
a given $C$-bounded turning circle be chosen? Recall from
(\ref{eq:wqs_then_bt}) that the image of the
unit circle by a $H$-weak-quasisymmetry is $C$-bounded turning, where
$C=\min\{2H, H^2\}$. Thus it is natural to ask, if any $C$-bounded
turning circle admits a $H$-weak-quasisymmetric parametrization, where
$H= \max\{C/2, \sqrt{C}\}$. As a starting point one may ask, if any
$1$-bounded turning circle admits a $1$-weak-quasisymmetric
parametrization. 

\section*{Acknowledgments}
\label{sec:acknowledgments}

The question that is answered by Theorem \ref{thm:main} was posed by
David Herron. Jussi V\"{a}is\"{a}l\"{a} provided many helpful
suggestions and references.

%
\bibliographystyle{amsalpha}
\bibliography{litlist}

\end{document}